\documentclass[a4paper]{article}

\addtolength{\topmargin}{-1.5 true cm}
\addtolength{\textheight}{2.6 true cm}
\addtolength{\textwidth}{1.2 true cm}

\usepackage{amsmath, amsthm, amsfonts, amssymb, accents}
\usepackage{graphicx,color}
\usepackage{srcltx}
\usepackage{dsfont}

\theoremstyle{plain}
\newtheorem{theorem}{Theorem}[section]

\newtheorem{lemma}[theorem]{Lemma}

\theoremstyle{remark}


\numberwithin{equation}{section}


\def\tht{\theta}
\def\Om{\Omega}

\def\e{\varepsilon}
\def\g{\gamma}
\def\G{\Gamma}

\def\p{\partial}
\def\D{\Delta}

\def\E{\mbox{\rm e}}
\def\a{\alpha}

\def\d{\delta}

\def\vp{\varphi}

\def\Odr{\mathcal{O}}
\def\H{W_2}

\def\Hinf{W_\infty}

\def\Ho{W_{2,0}}

\def\di{\,d}

\def\iu{\mathrm{i}}

\def\Hd{\mathcal{H}_{\e,\eta}^{\mathrm{D}}}
\def\Hr{\mathcal{H}_{\e,\eta}^{\mathrm{R}}}
\def\Hod{\mathcal{H}_{0}^{\mathrm{D}}}
\def\Hn{\mathcal{H}_{\e,\eta}^{\mathrm{N}}}

\def\hd{\mathfrak{h}_{\e,\eta}^{\mathrm{D}}}
\def\hr{\mathfrak{h}_{\e,\eta}^{\mathrm{R}}}
\def\hod{\mathfrak{h}_{0}^{\mathrm{D}}}
\def\hor1{\mathfrak{h}_{0}^{\mathrm{R}
}}
\def\Hor1{\mathcal{H}_{0}^{\mathrm{R}
}}
\def\hn{\mathfrak{h}_{\e,\eta}^{\mathrm{N}}}
\def\Hon{\mathcal{H}_{0}^{\mathrm{N}}}
\def\hon{\mathfrak{h}_{0}^{\mathrm{N}}}


\DeclareMathOperator{\Dom}{\mathfrak{D}}





\newcounter{assumption}
\setcounter{assumption}{0}

\begin{document}

\title{\textbf{Uniform resolvent convergence for strip with fast oscillating boundary}}
\author{Denis Borisov\,$^a$\footnote{Corresponding author},  Giuseppe Cardone$^b$, Luisa Faella$^c$, Carmen Perugia$^d$}

\date{\small
\begin{center}
\begin{quote}
\begin{enumerate}
{\it
\item[$a)$]
Institute of Mathematics of Ufa Scientific Center of RAS, Chernyshevskogo, str. 112, 450008, Ufa, Russian Federation \&
Bashkir State Pedagogical University,
October St.~3a, 450000, Ufa,
Russian Federation; \\ E-mail: \texttt{borisovdi@yandex.ru}
\item[$b)$]
University of Sannio,
Department of Engineering, Corso Garibaldi,
107, 82100 Benevento, Italy; E-mail: \texttt{giuseppe.cardone@unisannio.it}
\item[$c)$] University of Cassino and Lazio Meridionale, DIEI ``M.Scarano'', Via G. di Biasio, 43
03043 Cassino (FR); E-mail: \texttt{l.faella@unicas.it}
\item[$d)$] University of Sannio, DST,
Via Port'Arsa 11, 82100 Benevento, Italy; E-mail: \texttt{ cperugia@unisannio.it}
}
\end{enumerate}
\end{quote}
\end{center}}

\maketitle

\begin{abstract}
In a planar infinite strip with a fast oscillating boundary we consider an elliptic operator assuming that both the period and the amplitude of the oscillations are small. On the oscillating boundary we impose Dirichlet, Neumann or Robin boundary condition. In all cases we describe the homogenized operator, establish the uniform resolvent convergence of the perturbed resolvent to the homogenized one, and prove the estimates for the rate of convergence. These results are obtained as the order of the amplitude of the oscillations is less, equal or greater than that of the period. It is shown that under the homogenization the type of the boundary condition can change.
\end{abstract}

\begin{center}
\begin{quote}
MSC2010: 35B27, 47A55
\\
Keywords: homogenization, uniform resolvent convergence, oscillating boundary
\end{quote}
\end{center}


\section*{Introduction}

There are many papers devoted to the homogenization of boundary value problems in  domains with fast oscillating boundary. The simplest example of such boundary is given by the graph of the function $x_2=\eta(\e)b(x_1\e^{-1})$, where $\e$ is a small positive parameter, $\eta(\e)$ is a positive function tending to zero as $\e\to+0$, and $b$ is a smooth periodic function. The parameter $\e$ describes the period of the boundary oscillations while $\eta(\e)$  is their amplitude.

Most of the papers on such topic are devoted to the case of bounded domains with fast oscillating boundary. Not trying to cite all papers in this field, we just mention
\cite[Ch. I\!I\!I, Sec. 4]{OIS}, \cite{DD}--
\cite{Na4}, see also the references therein. Main results concerned  the identification of the homogenized problems and proving the  convergence theorems for the solutions. The homogenized (limiting) problems were the boundary value problems for the same equations in the same domains but with the mollified boundary instead of the oscillating one. The type of the condition on the mollified boundary depended on the original boundary condition and the geometry of the oscillations. If the amplitude of the oscillations is of the same order as the period (i.e., in above example $\eta\sim\e$),  the homogenized boundary condition is of the same type as the original condition on the oscillating boundary. In the case of Robin condition the homogenization gives rise to an additional term in the coefficient in the homogenized boundary condition; this term reflected the local geometry of the boundary oscillations. If the period of the boundary oscillations is smaller (in order) than the amplitude, the boundary is highly oscillating. To authors knowledge, such case was considered in \cite{CFP}, \cite{KN}, and \cite{MP}. In \cite{KN} the model was the spectral problem for the biharmonic operator with Dirichlet condition, while in \cite{CFP} the Robin problem for Poisson equation was studied. In the former case in particular it was shown that the homogenized boundary condition was the Dirichlet one while in the latter the authors discovered that in the case of highly oscillating boundary the homogenized boundary condition is also the Dirichlet while the perturbed problem involved the Robin condition. In \cite{MP} the Navier-Stokes system was considered and the boundary condition on the oscillating boundary was the Robin one with the small parameter involved in the coefficients. The homogenized problems were found and the weak convergence was established.

In \cite{AB1}--
\cite{AC} a boundary value problem for the semilinear elliptic equation with non-linear boundary conditions of Robin type in a bounded domain was considered. The domain was assumed to be perturbed and the only assumption was that the perturbed domain converged to a certain limiting one in the sense of Hausdorff and the same was valid for the boundaries. This setting includes also the case of fast oscillating boundary. The main result was the proof of the convergence of the perturbed solution to the limiting one in $\H^1$-norm and similar statements for the spectra.

It should be said that there are also many papers devoted to the problems in the domains with the oscillating boundaries when the period of the oscillations is small and the amplitude is finite. Since in our case the amplitude is small, it is quite a different problem. This is why here we do not dwell on the problems with finite amplitude.

Most of the results on the convergence of the solutions were established in the sense of the weak or strong resolvent convergence, and the resolvents were also treated in various possible norms. In some cases the estimates for the convergence rate were proven. It was also shown that constructing the next terms of the asymptotics for the perturbed solutions one get the estimates for the convergence rate or  improves it \cite{FHL}, \cite{FH}, \cite{BPC}, \cite{KN}, \cite{ChCh}, \cite{Mi}, \cite{Na3}, \cite{Na4}.  In some cases   complete asymptotic expansions were constructed \cite{ACG},  \cite{ACG2}, \cite{Na2}, \cite{GR}.

One more type of the established results is the uniform resolvent convergence for the problems. Such convergence was established just for few models, see \cite[Ch. I\!I\!I, Sec. 4]{OIS}, \cite{Na4}. The estimates for the rates of convergence were also established. In both papers the amplitude and the period of oscillations were of the same order. At the same time, the uniform resolvent convergence for the models considered in the homogenization theory is a quite strong results. Moreover, recently the series of papers by M.Sh.~Birman, T.A.~Suslina and V.V.~Zhikov, S.E.~Pastukhova have stimulated the interest to this aspect, see  \cite{CBS}--
\cite{Zh5}, the references therein and further papers by these authors. It was shown that the uniform resolvent convergence holds true for the elliptic operators with fast oscillating coefficients and the estimates for the rates of convergence were obtained. There are also same results for some problems in bounded domains, see \cite{Zh4}. Similar results but for the boundary homogenization were established in \cite{BBC2}--
\cite{BC}. Here the Laplacian in a planar straight infinite strip with frequently alternating boundary conditions was considered. Such boundary conditions were imposed by partitioning the boundary into small segments where Dirichlet and Robin conditions were imposed in turns. The homogenized problem involves one of the classical boundary conditions instead of the alternating ones. For all possible homogenized problems the uniform resolvent and the estimates for the rates of convergence were proven and the asymptotics for the spectra were constructed.

In the present paper we also consider the boundary homogenization for the elliptic operators in unbounded domains but the perturbation is a fast oscillating boundary. As the domain we choose a planar straight infinite strip with a periodic fast oscillating boundary; the operator is a general self-adjoint second order elliptic operator. The operator is regarded as an unbounded one in an appropriate $L_2$ space. On the oscillating boundary we impose Dirichlet, Neumann, or Robin condition. Apart from a mathematical interest to this problem, as a physical motivation we can mention a model of a planar quantum or acoustic waveguide with a fast oscillating boundary.

Our main result is the form of the homogenized operator and the uniform resolvent convergence of the perturbed operator to the homogenized one. This convergence is established in the sense of the norm of the operator acting from $L_2$ into $\H^1$. The estimates for the rate of convergence are provided. Most of the estimates are sharp. We show that in the case of the Dirichlet or Neumann condition on the oscillating boundary the homogenized problem involves the same condition on the mollified boundary no matter how the period and amplitude of the oscillations behave. Provided the amplitude is not greater than the period (in order), the Robin condition on the oscillating boundary leads us to a similar condition but with an additional term in the coefficient. If the amplitude is greater than the period, the homogenization transforms the Robin condition into the Dirichlet one. The last result is in a good accordance with a similar case  treated  in \cite{CFP}. The difference is that in \cite{CFP} the strong resolvent convergence was proven provided the coefficient in the Robin condition is positive, while we succeeded to prove the uniform resolvent convergence provided the coefficient is either positive or non-negative and vanishing on the set of zero measure.

\section{The problem and the main results}

Let $x=(x_1,x_2)$ be the Cartesian coordinates in $\mathds{R}^2$,
$\e$ be a small positive parameter, $\eta=\eta(\e)$ be a non-negative function uniformly bounded for sufficiently small $\e$, $b=b(t)$ be a
non-negative $1$-periodic function belonging to $C^2(\mathds{R})$.
We define two domains, cf. fig.~\ref{pic1},
\begin{equation*}
\Om_0:=\{x: 0<x_2<d\},\quad \Om_\e:=\{x:
\eta(\e)b(x_1\e^{-1})<x_2<d\},
\end{equation*}
where $d>0$ is a constant, and its boundaries are indicated as
\begin{equation*}
\G:=\{x: x_2=d\},\quad \G_0:=\{x: x_2=0\},\quad \G_\e:=\{x:
x_2=\eta(\e) b(x_1\e^{-1})\}.
\end{equation*}
By $A_{ij}=A_{ij}(x)$, $A_j=A_j(x)$, $A_0=A_0(x)$, $i,j=1,2$, we denote the functions defined on $\Om_0$ and satisfying the
belongings $A_{ij}\in\Hinf^2(\Om_0)$, $A_j\in\Hinf^1(\Om_0)$,
$A_0\in L_\infty(\Om_0)$. Functions $A_{ij}$, $A_j$ are assumed to be complex-valued, while $A_0$ is real-valued. In addition, functions $A_{ij}$ satisfy the ellipticity condition
\begin{equation}\label{2.0}
A_{ij}=\overline{A}_{ji},\quad \sum\limits_{i,j=1}^2 A_{ij}z_i
\overline{z}_j\geqslant c_0(|z_1|^2+|z_2|^2),\quad x\in\Om_0,\quad
z_j\in\mathds{C}.
\end{equation}
By $a=a(x)$ we denote a real function defined on $\{x:  0<x_2<\d\}$ for some small fixed $\d$, and it is supposed that $a\in\Hinf^1(\{x: 0<x_2<d\})$.

The main object of our study is the operator
\begin{equation}\label{2.1}
-\sum\limits_{i,j=1}^{2}\frac{\p}{\p x_j} A_{ij} \frac{\p}{\p x_i}+\sum\limits_{j=1}^{2} A_j\frac{\p}{\p x_j}-\frac{\p}{\p x_j} \overline{A_j} + A_0\quad \text{in}\quad L_2(\Om_\e)
\end{equation}
subject to  Dirichlet condition on $\G$. On the other
boundary we choose either Dirichlet condition
\begin{equation*}
u=0\quad\text{on}\quad\G_\e,
\end{equation*}
or Robin condition
\begin{equation*}
\left(\frac{\p}{\p\nu^\e}+a\right)u=0\quad\text{on}\quad\G_\e,\quad
\frac{\p}{\p\nu^\e}=-\sum\limits_{i,j=1}^{2} A_{ij}\nu_j^\e \frac{\p}{\p x_i}-\sum\limits_{j=1}^{2} \overline{A}_j\nu_j^\e,
\end{equation*}
where $\nu^\e=(\nu^\e_1,\nu^\e_2)$ is the outward normal to $\G_\e$. In the case of Dirichlet condition on $\G_\e$ we denote this operator as $\Hd$, while for   Robin condition it is $\Hr$. The former includes also the case of Neumann condition since the function $a$ can be identically zero.

Rigorously we introduce $\Hd$ as the lower-semibounded self-adjoint operator in $L_2(\Om_\e)$ associated with the closed symmetric lower-semibounded sesquilinear form
\begin{align*}
\hd(u,v):=&\sum\limits_{i,j=1}^{2} \left(A_{ij}\frac{\p u}{\p x_j},\frac{\p v}{\p x_i}\right)_{L_2(\Om_\e)}+ \sum\limits_{j=1}^{2} \left(A_j \frac{\p u}{\p x_j},v\right)_{L_2(\Om_\e)}
\\
&+ \sum\limits_{j=1}^{2} \left(u,A_j \frac{\p v}{\p x_j}\right)_{L_2(\Om_\e)} + (A_0 u,v)_{L_2(\Om_\e)}
\end{align*}
in $L_2(\Om_\e)$ with the domain $\Dom(\hd):=\Ho^1(\Om_\e,\p\Om_\e)$. Hereinafter $\Dom(\cdot)$ is the domain of a form or an operator, and $\Ho^j(\Om,S)$ denotes the Sobolev space consisting of the functions in $\H^j(\Om)$ with zero trace on a curve $S$ lying in a domain $\Om\subset\mathds{R}^2$. The operator $\Hr$ is introduced in the same way via the sesquilinear form
\begin{align*}
\hr(u,v):=&\sum\limits_{i,j=1}^{2} \left(A_{ij}\frac{\p u}{\p x_j},\frac{\p v}{\p x_i}\right)_{L_2(\Om_\e)}+ \sum\limits_{j=1}^{2} \left(A_j \frac{\p u}{\p x_j},v\right)_{L_2(\Om_\e)}
\\
&+ \sum\limits_{j=1}^{2} \left(u,A_j \frac{\p v}{\p x_j}\right)_{L_2(\Om_\e)} + (A_0 u,v)_{L_2(\Om_\e)} + (au,v)_{L_2(\G_\e)}
\end{align*}
with the domain $\Dom(\hr):=\Ho^1(\Om_\e,\G)$.

\begin{figure}\label{pic1}
\begin{center}
\includegraphics[scale=0.65]{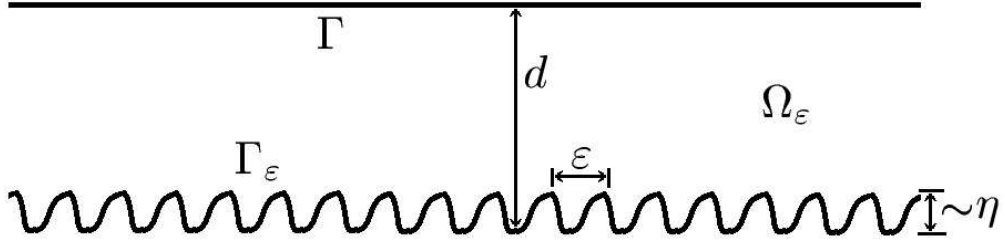}

\caption{Domain with oscillating boundary}
\end{center}
\end{figure}

The main aim of the paper is to study the asymptotic behavior of the resolvents of $\Hd$ and $\Hr$ as $\e\to+0$. To formulate the main results we first introduce some additional operators.

By $\Hod$ we denote operator (\ref{2.1}) in $L_2(\Om_0)$ subject to  Dirichlet condition. We introduce it by analogue with $\Hd$ as associated with the form
\begin{equation}\label{2.7}
\begin{aligned}
\hod(u,v):=&\sum\limits_{i,j=1}^{2} \left(A_{ij}\frac{\p u}{\p x_j},\frac{\p v}{\p x_i}\right)_{L_2(\Om_0)}+ \sum\limits_{j=1}^{2} \left(A_j \frac{\p u}{\p x_j},v\right)_{L_2(\Om_0)}
\\
&+\sum\limits_{j=1}^{2} \left(u,A_j \frac{\p v}{\p x_j}\right)_{L_2(\Om_0)} + (A_0 u,v)_{L_2(\Om_0)}
\end{aligned}
\end{equation}
in $L_2(\Om_0)$ with the domain $\Dom(\hod):=\Ho^1(\Om_0,\p\Om_0)$. The domain of  operator $\Hod$ is $\Ho^2(\Om_0,\p\Om_0)$ that can be shown by analogy with \cite[Ch. I\!I\!I, Sec. 7,8]{LU}, \cite[Lm. 2.2]{B-AA-08}.

Our first main result describes the uniform resolvent convergence for $\Hd$.

\begin{theorem}\label{th2.1}
Let $f\in L_2(\Om_0)$. For sufficiently small $\e$ the estimate
\begin{equation*}
\|(\Hd-\iu)^{-1}f-(\Hod-\iu)^{-1}f\|_{\H^1(\Om_\e)}\leqslant C\eta^{1/2} \|f\|_{L_2(\Om_0)}
\end{equation*}
holds true, where $C$ is a constant independent of $\e$ and $f$.
\end{theorem}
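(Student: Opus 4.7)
Let $u_0 := (\Hod-\iu)^{-1}f \in \Ho^2(\Om_0,\p\Om_0)$, so by elliptic regularity for $\Hod$ we have $\|u_0\|_{\H^2(\Om_0)} \leq C\|f\|_{L_2(\Om_0)}$, and let $u_\e := (\Hd-\iu)^{-1}f$. Because $b\geq 0$, one has $\Om_\e \subset \Om_0$, so $u_0$ restricts naturally to $\Om_\e$, but it fails to vanish on $\G_\e$ and hence does not lie in $\Dom(\hd)$. The plan is to subtract a simple cutoff corrector $W_\e$ supported in a thin horizontal layer along $\G_0$, producing a competitor $v_\e := u_0 + W_\e \in \Dom(\hd)$, and then to use the ellipticity of $\hd$ to dominate $\|u_\e - v_\e\|_{\H^1(\Om_\e)}$ by $\|W_\e\|_{\H^1(\Om_\e)}$.

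\textbf{Construction and key estimate.} Choose $\chi \in C^\infty(\mathds{R})$ with $\chi(t)=1$ for $t\leq \|b\|_{L_\infty}+1$ and $\chi(t)=0$ for $t\geq \|b\|_{L_\infty}+2$, and put $W_\e(x):=-u_0(x)\chi(x_2/\eta)$. For $\eta$ sufficiently small, $\chi(x_2/\eta)\equiv 1$ on $\G_\e$ and $\chi(x_2/\eta)\equiv 0$ on $\G$, so indeed $v_\e \in \Ho^1(\Om_\e,\p\Om_\e)$. Since the cutoff depends only on $x_2$, the tangential derivative $\p_1 W_\e=-\chi(x_2/\eta)\p_1 u_0$ is supported in the strip $\{0<x_2<C\eta\}$ and the 1D trace theorem applied to $\p_1 u_0\in \H^1(\Om_0)$ yields $\|\p_1 W_\e\|_{L_2(\Om_\e)}^2 \leq C\eta\|u_0\|_{\H^2}^2$. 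The delicate term is $\eta^{-1}\chi'(x_2/\eta)u_0$ appearing in $\p_2 W_\e$; here the Dirichlet condition $u_0|_{\G_0}=0$ together with Cauchy--Schwarz gives $|u_0(x_1,x_2)|^2 \leq x_2\int_0^{x_2}|\p_2 u_0(x_1,s)|^2\,ds$, and a further application of the trace theorem to $\p_2 u_0 \in \H^1(\Om_0)$ upgrades this to the integrated bound $\int_{\mathds{R}}|u_0(x_1,x_2)|^2\,dx_1 \leq Cx_2^2\|u_0\|_{\H^2(\Om_0)}^2$, so that
\begin{equation*}
\|\eta^{-1}\chi'(x_2/\eta)u_0\|_{L_2(\Om_\e)}^2 \leq \frac{C}{\eta^2}\int_0^{C\eta}x_2^2\,dx_2\cdot\|u_0\|_{\H^2}^2 \leq C\eta\|f\|_{L_2}^2.
\end{equation*}
Combining yields $\|W_\e\|_{\H^1(\Om_\e)}\leq C\eta^{1/2}\|f\|_{L_2(\Om_0)}$.

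\textbf{Form identity and conclusion.} For $\phi\in\Dom(\hd)$, the extension $\widetilde\phi$ of $\phi$ by zero across $\G_\e$ lies in $\Dom(\hod)$, and since the integrands defining $\hd(u_0,\phi)$ and $(u_0,\phi)_{L_2(\Om_\e)}$ involve only $\phi$ and $\nabla\phi$, the variational identity $(\hod-\iu)(u_0,\widetilde\phi)=(f,\widetilde\phi)_{L_2(\Om_0)}$ reduces to $(\hd-\iu)(u_0,\phi)=(f,\phi)_{L_2(\Om_\e)}$. Subtracting the analogous identity for $u_\e$ gives
\begin{equation*}
(\hd-\iu)(v_\e-u_\e,\phi) \;=\; -(\hd-\iu)(W_\e,\phi), \qquad \phi\in\Dom(\hd).
\end{equation*}
Testing with $\phi=v_\e-u_\e$ and combining ellipticity (\ref{2.0}) with $\IM(\hd-\iu)(u,u)=-\|u\|_{L_2(\Om_\e)}^2$ yields the $\e$-uniform coercivity $\|u\|_{\H^1(\Om_\e)}^2\leq C|(\hd-\iu)(u,u)|$ (the negative lower-order contribution is absorbed via the imaginary part), and the Cauchy--Schwarz bound $|(\hd-\iu)(W_\e,\phi)|\leq C\|W_\e\|_{\H^1(\Om_\e)}\|\phi\|_{\H^1(\Om_\e)}$ then gives $\|v_\e-u_\e\|_{\H^1(\Om_\e)}\leq C\|W_\e\|_{\H^1(\Om_\e)}$; the triangle inequality together with the corrector bound proves the claim. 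The principal obstacle is the factor $\eta^{-1}$ arising in $\p_2 W_\e$: a naive Poincar\'e estimate only delivers $\|u_0\|_{L_2(\{x_2<C\eta\})}=O(\eta)$, which after division by $\eta$ would destroy the desired rate, so one must squeeze an extra power of $x_2$ out of the $\H^2$-regularity of $u_0$ near $\G_0$.
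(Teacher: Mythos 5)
Your proof is correct and follows essentially the same route as the paper: a cutoff corrector $W_\e = -u_0\chi(x_2/\eta)$ supported in an $O(\eta)$-thick layer near $\G_0$ (the paper's corrector is $-K u_0$ with $K$ as in (\ref{3.33}), which is the same object up to a harmless shift of the transition region), the key estimate $\|W_\e\|_{\H^1(\Om_\e)}\leq C\eta^{1/2}\|f\|_{L_2}$ extracted from the vanishing of $u_0$ on $\G_0$ and $u_0\in\H^2(\Om_0)$ (exactly the content of Lemma~\ref{lm3.5}), and the triangle inequality. The one minor presentational difference is that instead of expanding $\hod(u_0,(1-K)v_\e)$ into commutator terms involving $\p K/\p x_2$ and estimating those one by one as in (\ref{3.6a})--(\ref{3.11a}), you observe directly that the zero-extension identity gives $(\hd-\iu)(u_0-u_\e,\phi)=0$ for $\phi\in\Dom(\hd)$ and then invoke boundedness of $\hd$ on $\H^1\times\H^1$ to dominate everything by $\|W_\e\|_{\H^1}$, which is slightly more economical bookkeeping but the same substance; note also a harmless sign slip in your form identity ($+(\hd-\iu)(W_\e,\phi)$, not $-$).
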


The next four theorems describe the resolvent convergence for operator $\Hr$. Given $a_0\in\Hinf^1(\G_0)$, let $\Hor1$ be the self-adjoint operator  in $L_2(\Om_0)$ associated with the lower-semibounded sesquilinear symmetric form
\begin{align*}
\hor1(u,v):=&\sum\limits_{i,j=1}^{2} \left(A_{ij}\frac{\p u}{\p x_j},\frac{\p v}{\p x_i}\right)_{L_2(\Om_0)}+ \sum\limits_{j=1}^{2} \left(A_j \frac{\p u}{\p x_j},v\right)_{L_2(\Om_0)}
\\
&+ \sum\limits_{j=1}^{2} \left(u,A_j \frac{\p v}{\p x_j}\right)_{L_2(\Om_0)} + (A_0 u,v)_{L_2(\Om_0)} + (a_0u,v)_{L_2(\G_0)}
\end{align*}
with the domain $\Dom(\hor1):=\Ho^1(\Om_0,\G)$. It can be shown by analogy with \cite[Ch. I\!I\!I, Sec. 7,8]{LU}, \cite[Lm. 2.2]{B-AA-08} that the domain of $\Hor1$ consists of the functions $u \in \Ho^2(\Om_0,\G)$ satisfying Robin condition
\begin{equation}\label{2.3a}
\left(\frac{\p\hphantom{\nu^0}}{\p\nu^0}+a_0\right)u=0 \quad\text{on}\quad\G_0,\quad
\frac{\p\hphantom{\nu^0}}{\p\nu^0}:=-\sum\limits_{i=1}^{2} A_{i2} \frac{\p\hphantom{x_i}}{\p x_i}-\overline{A}_2.
\end{equation}

First we consider the particular case of Neumann condition on $\G_\e$, i.e., $a=0$. Operator $\Hr$ and  associated quadratic form $\hr$ are re-denoted in this case by $\Hn$ and $\hn$. By $\Hon$ we denote the self-adjoint lower-semibounded operator in $L_2(\Om_0)$ associated with the sesquilinear form $\hon$ which is $\hor1$ taken for $a_0\equiv 0$. Its domain is the set of the functions in $\Ho^2(\Om_0,\G)$ satisfying  boundary condition (\ref{2.3a}) with $a_0=0$. The resolvent convergence in this case is given in
\begin{theorem}\label{th2.3}
Let $f\in L_2(\Om_\e)$.
Then
for sufficiently small $\e$ the estimate
\begin{equation*}
\|(\Hn-\iu)^{-1}f-(\Hon-\iu)^{-1}f\|_{\H^1(\Om_\e)}\leqslant
C \eta^{1/2}  \|f\|_{L_2(\Om_0)}
\end{equation*}
holds true, where $C$ is a constant independent of $\e$ and $f$.
\end{theorem}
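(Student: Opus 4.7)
My plan is to run an energy estimate for the difference $w_\e := u_\e - u_0|_{\Om_\e}$, where $u_\e=(\Hn-\iu)^{-1}f\in\Dom(\hn)$ and $u_0=(\Hon-\iu)^{-1}f\in\H^2(\Om_0)$. Since $\Om_\e\subset\Om_0$ and $u_0|_\G=0$, the restriction $u_0|_{\Om_\e}$ belongs to $\Dom(\hn)$, so $w_\e\in\Dom(\hn)$. Because $u_0\in\H^2$, integration by parts in $\hn(u_0,v)$ over $\Om_\e$ yields
\[
\hn(u_0,v)-\iu(u_0,v)_{L_2(\Om_\e)} = (f,v)_{L_2(\Om_\e)} - \int_{\G_\e}\frac{\p u_0}{\p\nu^\e}\bar v\di\sigma,\qquad v\in\Dom(\hn),
\]
the trace $\p u_0/\p\nu^\e|_{\G_\e}$ being nonzero since $u_0$ satisfies the Neumann condition only on $\G_0$, not on $\G_\e$. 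Subtracting from the identity defining $u_\e$ gives $\hn(w_\e,v)-\iu(w_\e,v)_{L_2(\Om_\e)}=\int_{\G_\e}(\p u_0/\p\nu^\e)\bar v\di\sigma$. Testing with $v=w_\e$ and using that $\hn(w_\e,w_\e)\in\mathds{R}$ (the form is Hermitian), the imaginary part controls $\|w_\e\|_{L_2(\Om_\e)}^2$ by the modulus of the boundary integral, while the real part together with a G{\aa}rding coercivity inequality for $\hn$ controls the full $\H^1$-norm, so that
\[
\|w_\e\|_{\H^1(\Om_\e)}^2\leq C\Bigl|\int_{\G_\e}\frac{\p u_0}{\p\nu^\e}\bar w_\e\di\sigma\Bigr|.
\]

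To estimate this boundary integral I will introduce the vector field $W=(\psi_1,\psi_2)$ with $\psi_j:=\sum_i A_{ij}u_{0,x_i}+\overline{A}_j u_0$, so that $\p u_0/\p\nu^\e=-W\cdot\nu^\e$ on $\G_\e$ and the Neumann condition $\p u_0/\p\nu^0=0$ on $\G_0$ reads exactly $\psi_2|_{\G_0}=0$. I will choose an $\H^1(\Om_0)$-extension $\tilde w$ of $w_\e$ with $\tilde w|_\G=0$ and apply the divergence theorem to $W\tilde w$ on the thin strip $\Om_0\setminus\Om_\e=\{x:0<x_2<\eta b(x_1/\e)\}$; the $\G_0$-contribution vanishes thanks to $\psi_2|_{\G_0}=0$, giving
\[
\int_{\G_\e}\frac{\p u_0}{\p\nu^\e}\bar w_\e\di\sigma=\int_{\Om_0\setminus\Om_\e}\bigl(\Div W\cdot\tilde w+W\cdot\nabla\tilde w\bigr)\di x.
\]
The strong equation $(\Hon-\iu)u_0=f$ on $\Om_0$ yields $\Div W=-f-\iu u_0+\sum_j A_j u_{0,x_j}+A_0u_0$ and, combined with elliptic regularity $\|u_0\|_{\H^2(\Om_0)}\leq C\|f\|_{L_2(\Om_0)}$, gives $\|\Div W\|_{L_2(\Om_0)}+\|W\|_{\H^1(\Om_0)}\leq C\|f\|_{L_2(\Om_0)}$. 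Since $\Om_0\setminus\Om_\e\subset\{x_2<\eta\|b\|_\infty\}$ is a thin strip, the trace-based inequality $\|g\|_{L_2(\{x_2<\eta\|b\|_\infty\})}\leq C\eta^{1/2}\|g\|_{\H^1(\Om_0)}$, applied to $g=W$ and to $g=\tilde w$, together with Cauchy--Schwarz on both volume integrals, bounds the right-hand side by $C\eta^{1/2}\|f\|_{L_2(\Om_0)}\|\tilde w\|_{\H^1(\Om_0)}$.

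The main technical obstacle will be constructing $\tilde w$ with $\|\tilde w\|_{\H^1(\Om_0)}\leq C\|w_\e\|_{\H^1(\Om_\e)}$ uniformly in $\e$, specifically in the regime $\eta\gg\e$ where the slope $\eta b'/\e$ of $\G_\e$ is unbounded and a naive reflection of $w_\e$ across $\G_\e$ would contribute a factor $1+\eta/\e$ to the $\H^1$-norm, destroying the $\eta^{1/2}$-gain. My plan is to exploit the periodic structure of the oscillation: since $b\in C^2$ is fixed and $1$-periodic, the family $\{\Om_\e\}$ enjoys a uniform $(\varepsilon,\delta)$-property with constants depending only on $\|b\|_{C^2}$ and $d$, so a Jones-type extension theorem provides $\tilde w$ with operator norm independent of $\e$; alternatively one may build $\tilde w$ cell-by-cell using the $\e$-periodicity and glue with a partition of unity. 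Once such a uniform extension is in hand, the chain of bounds above yields $\|w_\e\|_{\H^1(\Om_\e)}^2\leq C\eta^{1/2}\|f\|_{L_2(\Om_0)}\|w_\e\|_{\H^1(\Om_\e)}$, and an absorption argument completes the proof.
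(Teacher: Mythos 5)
Your setup (testing with $w_\e$, a G{\aa}rding-type coercivity bound for $\hn$, and the conversion of the $\G_\e$-boundary term into a thin-strip integral via the divergence theorem applied to $W\bar{\tilde w}$, with the $\G_0$-contribution killed by $\psi_2|_{\G_0}=0$) is a legitimate reformulation, and the computation of $\Div W$, the elliptic bound $\|u_0\|_{\H^2(\Om_0)}\leqslant C\|f\|_{L_2(\Om_0)}$, and the trace-type estimate $\|g\|_{L_2(\{x_2<\eta b_*\})}\leqslant C\eta^{1/2}\|g\|_{\H^1(\Om_0)}$ are all correct. The gap is the assertion that the extension operator $w_\e\mapsto\tilde w$ from $\H^1(\Om_\e)$ to $\H^1(\Om_0)$ has $\e$-independent norm. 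Theorem~\ref{th2.3} carries no hypothesis on $\eta/\e$, so it must cover the regime $\eta/\e\to+\infty$ of relatively high oscillations, and in that regime the domains $\Om_\e$ are \emph{not} uniformly $(\varepsilon,\delta)$-domains: two points on either side of a crest at depth comparable to $\eta$ can be at $x_1$-distance $O(\e)$ while any connecting arc in $\Om_\e$ must rise by $O(\eta)$, so the length-to-distance ratio blows up like $\eta/\e$. Concretely, the Lipschitz constant of $\G_\e$ is $\e^{-1}\eta\|b'\|_\infty\to\infty$, and the standard reflection or cell-wise constructions produce extension operators whose $\H^1$-norm grows with this ratio, which destroys the $\eta^{1/2}$ gain. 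The claim that the extension constant depends only on $\|b\|_{C^2}$ and $d$ is therefore not justified, and I believe it is simply false.

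The paper avoids extending $w_\e$ at all. It writes the boundary integral with the explicit normal $\nu^\e$, separating a term proportional to $\e^{-1}\eta\,b'(x_1/\e)\,w_1$ from a term proportional to $w_2:=\psi_2$. The $w_2$-term is small directly because $\psi_2|_{\G_0}=0$. The dangerous $\e^{-1}\eta$-factor in the $w_1$-term is eliminated by an integration by parts \emph{in $x_1$}: introducing $w_3(x):=\int_{b_*\eta}^{x_2}w_1\overline{v}_\e\,dt$ one observes that $\e^{-1}\eta\,b'\,w_1\overline{v}_\e\big|_{x_2=\eta b}=\frac{d}{dx_1}w_3(x_1,\eta b(x_1\e^{-1}))-\frac{\p w_3}{\p x_1}(x_1,\eta b(x_1\e^{-1}))$; the total $x_1$-derivative integrates to zero, and the remaining term is a volume integral over the strip $\Om_\e\cap\{x_2<b_*\eta\}$, which lies \emph{inside} $\Om_\e$, so no extension of $v_\e$ is needed. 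Your strategy can in fact be salvaged along the same lines: apply the divergence theorem to $W\bar w_\e$ on $\Om_\e\cap\{x_2<b_*\eta\}$ (not on $\Om_0\setminus\Om_\e$). The $\G_\e$-piece gives the boundary integral you want, the volume piece is $O(\eta^{1/2})$ exactly as in your estimate, and the new boundary piece on $\{x_2=b_*\eta\}$ is $\int W_2\bar w_\e\,dx_1$, which is $O(\eta^{1/2})$ because $W_2=\psi_2$ vanishes on $\G_0$ (so $\|\psi_2\|_{L_2(\{x_2=b_*\eta\})}\leqslant C\eta^{1/2}\|f\|_{L_2}$ by the same one-dimensional argument as in Lemma~\ref{lm3.5}) while $\|w_\e\|_{L_2(\{x_2=b_*\eta\})}\leqslant C\|w_\e\|_{\H^1(\Om_\e)}$ by Lemma~\ref{lm4.1}. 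This modification keeps the conceptual clarity of your divergence-theorem picture while staying inside $\Om_\e$, so the uniform extension operator is no longer required.
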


Assume now $a\not\equiv 0$. Here we consider separately two cases,
\begin{align}
&\e^{-1}\eta(\e)\to \a=\mathrm{const}\geqslant 0, \quad\e\to+0,\label{2.8}
\\
&\e^{-1}\eta(\e)\to+\infty, \hphantom{\mathrm{cons}\geqslant M} \quad  \e\to+0.\label{2.9}
\end{align}

The first assumption means that the amplitude of the oscillation of curve $\G_\e$ is of the same order (or smaller) as the period. The other assumption corresponds to the case when the amplitude is much greater than the period. In what follows the first case is referred to as a relatively slow oscillating boundary $\G_\e$ while the other describes relatively high oscillating boundary $\G_\e$.

We begin with the slowly oscillating boundary. We denote
\begin{equation}\label{2.10a}
a_0(x_1):=a(x_1,0)\int\limits_0^1\sqrt{1+\a^2\big(b'(t)\big)^2}  \di t.
\end{equation}

\begin{theorem}\label{th2.2}
Suppose (\ref{2.8}) and let $f\in L_2(\Om_\e)$. Then
for sufficiently small $\e$ the estimate
\begin{equation*}
\|(\Hr-\iu)^{-1}f-(\Hor1-\iu)^{-1}f\|_{\H^1(\Om_\e)}\leqslant
C(\eta^{1/2}(\e)+|\e^{-2}\eta^2(\e)-\a^2|)\|f\|_{L_2(\Om_0)}
\end{equation*}
holds true, where  function $a_0$ in (\ref{2.3a}) is defined in (\ref{2.10a}), and $C$ is a constant independent of $\e$ and $f$.
\end{theorem}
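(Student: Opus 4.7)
The plan is to follow the standard energy approach. Set $u_\e := (\Hr-\iu)^{-1}f$ and $u_0 := (\Hor1-\iu)^{-1}f$; since $b\geqslant 0$, we have $\Om_\e\subseteq\Om_0$, so I restrict $u_0$ to $\Om_\e$ and write $v_\e := u_\e - u_0 \in \Ho^1(\Om_\e,\G)$. Subtracting the two resolvent identities yields, for every $\phi\in\Ho^1(\Om_\e,\G)$,
\begin{equation*}
\hr(v_\e,\phi) - \iu(v_\e,\phi)_{L_2(\Om_\e)} = (f,\phi)_{L_2(\Om_\e)} - \hr(u_0,\phi) + \iu(u_0,\phi)_{L_2(\Om_\e)}.
\end{equation*}
Since $u_0\in\Ho^2(\Om_0,\G)$ satisfies the strong form of~(\ref{2.1}) in $\Om_0$ and the Robin condition~(\ref{2.3a}) on $\G_0$, Green's first identity on $\Om_\e$ collapses the right-hand side to the boundary functional
\begin{equation*}
R_\e(\phi) := \int_{\G_\e}\left(\frac{\p u_0}{\p \nu^\e} + a u_0\right)\overline{\phi}\,d\sigma.
\end{equation*}
By the uniform coercivity of $\hr$ (after a constant shift, with trace and coercivity constants uniform in $\e$ via a bi-Lipschitz flattening of $\G_\e$ whose Jacobian is $1+O(\eta)$), the theorem reduces to
$
|R_\e(\phi)| \leqslant C\bigl(\eta^{1/2} + |\e^{-2}\eta^2-\a^2|\bigr)\|f\|_{L_2(\Om_0)}\|\phi\|_{\H^1(\Om_\e)}.
$

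To estimate $R_\e$ I parametrize $\G_\e$ by $x_1\mapsto(x_1,\eta b(x_1/\e))$, so that $\nu_1^\e\,d\sigma = \eta\e^{-1}b'(x_1/\e)\,dx_1$, $\nu_2^\e\,d\sigma = -dx_1$, and $d\sigma = \sqrt{1+\e^{-2}\eta^2(b'(x_1/\e))^2}\,dx_1$. Writing $\sigma_\a(t) := \sqrt{1+\a^2(b'(t))^2}$ and $\overline{\sigma}_\a := \int_0^1 \sigma_\a$, so that $a_0 = a(\cdot,0)\overline{\sigma}_\a$, the estimate proceeds in three steps. \emph{Step 1:} Taylor-expand $u_0$, $a$, $A_{ij}$ and $\overline{A}_j$ from $x_2=0$ to $x_2=\eta b(x_1/\e)$; using $u_0\in\Ho^2(\Om_0,\G)$ and the uniform trace inequality $\|w\|_{L_2(\G_\e)}\leqslant C\|w\|_{\H^1(\Om_\e)}$, all Taylor remainders are bounded by $C\eta^{1/2}\|u_0\|_{\H^2(\Om_0)}\|\phi\|_{\H^1(\Om_\e)}\leqslant C\eta^{1/2}\|f\|_{L_2(\Om_0)}\|\phi\|_{\H^1(\Om_\e)}$. \emph{Step 2:} at the principal order $x_2=0$, and after replacing $\sqrt{1+\e^{-2}\eta^2(b')^2}$ by $\sigma_\a(x_1/\e)$ (an operation with pointwise error $\leqslant C|\e^{-2}\eta^2-\a^2|$), the contributions $\sum_i A_{i2}\p_i u_0 + \overline{A}_2 u_0$ and $a\overline{\sigma}_\a u_0$ cancel exactly thanks to the Robin identity~(\ref{2.3a}) combined with $a_0 = a(\cdot,0)\overline{\sigma}_\a$. \emph{Step 3:} the remaining integrals have the form $\int g(x_1)\overline{\phi}(x_1,0)\,\omega(x_1/\e)\,dx_1$, where $g$ is built from $a$, $u_0$ and first $x_2$-derivatives of $u_0$ on $\G_0$ (hence in $H^{1/2}(\mathds{R})$ with bound by $\|f\|_{L_2(\Om_0)}$) and $\omega \in \{\sigma_\a-\overline{\sigma}_\a,\,b'\}$ is mean-zero and $1$-periodic; introducing a bounded periodic primitive $B$ of $\omega$ and integrating by parts in $x_1$ converts each such integral into one with prefactor $\e$, estimated via the $H^{-1/2}(\mathds{R})$--$H^{1/2}(\mathds{R})$ duality against $\phi|_{\G_0}$, yielding the rate $O(\e^{1/2}) = O(\eta^{1/2})$ under~(\ref{2.8}).

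The main obstacle is Step~3: the test function $\phi\in\H^1(\Om_\e)$ has only $H^{1/2}$-regularity on $\G_0$, so the oscillating-integral argument cannot afford to pass a derivative onto $\phi$. The remedy is to keep the derivative on the smooth factor $g$ while pairing the rescaled primitive $\e B(\cdot/\e)$ (of $H^{1/2}$-norm $O(\e^{1/2})$ on bounded intervals) with $\phi|_{\G_0}$ through fractional Sobolev duality, matching precisely the regularity available. A secondary technical issue is the uniformity in $\e$ and $\eta$ of the extension and trace constants on $\Om_\e$, handled by the bi-Lipschitz flattening mentioned above. The degenerate case $\a=0$ is automatic: the factor $\eta\e^{-1}\to 0$ suppresses all terms carrying $b'(x_1/\e)$, $\overline{\sigma}_0 = 1$, and the $|\e^{-2}\eta^2-\a^2|$ term absorbs the transition.
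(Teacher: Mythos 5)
Your overall plan matches the paper's: the same $v_\e=u_\e-u_0$, the same reduction via the two resolvent identities to a boundary functional $\big(\big(\tfrac{\p}{\p\nu^\e}+a\big)u_0,\phi\big)_{L_2(\G_\e)}$, and closure via the uniform coercivity of Lemma~\ref{lm4.2}. But your Step~3 has a real gap. After introducing the periodic primitive $B$ and integrating by parts in $x_1$, you want to pair $\e B(x_1/\e)$ with $\tfrac{d}{dx_1}(g\,\overline{\phi})$ via $H^{1/2}$--$H^{-1/2}$ duality. For the $b'$-carrying term this fails: there $g$ contains the trace of $\p_{x_2}u_0$ on $\G_0$, which is only in $H^{1/2}(\mathds R)$, as is $\phi|_{\G_\e}$; the product $g\,\overline{\phi}$ of two $H^{1/2}$ functions is not in $H^{1/2}(\mathds R)$ (not even in $H^s$ for any $s>0$ in general), so $\tfrac{d}{dx_1}(g\overline\phi)\notin H^{-1/2}$ and the duality produces no gain. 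The paper sidesteps this entirely, and without any use of periodicity, by setting $w_3(x)=\int_{b_*\eta}^{x_2}w_1\overline v_\e\,dt$, writing $\e^{-1}\eta\, b'\,w_1\overline v_\e|_{\G_\e}=\tfrac{d}{dx_1}w_3|_{\G_\e}-\p_{x_1}w_3|_{\G_\e}$, dropping the total derivative after integration, and converting the remainder into an area integral over a strip of thickness $O(\eta)$ controlled by Lemma~\ref{lm3.5}. You should adopt that thin-strip device (for both oscillating factors), rather than the fractional duality.

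On the positive side, your Step~3 instinct that the $\sigma_\a(x_1/\e)-\overline\sigma_\a$ contribution must be treated as an \emph{oscillatory integral}, not pointwise, is correct and is actually sharper than what the paper writes: the paper's estimate of integral~(\ref{3.22}) rests on the asserted pointwise bound $\big|a(x_1,\eta b)\sqrt{1+\e^{-2}\eta^2(b')^2}-a_0(x_1)\big|\leqslant C(\eta^{1/2}+|\e^{-2}\eta^2-\a^2|)$, which fails whenever $\a>0$ and $|b'|$ is non-constant (the oscillating factor $\sqrt{1+\a^2(b'(x_1/\e))^2}-\overline\sigma_\a$ is $O(1)$, not $o(1)$). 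So an oscillatory-integral argument is genuinely required at that point. For that particular term the factor $g=a(\cdot,0)u_0(\cdot,0)\in H^{3/2}(\mathds R)$ is better than $H^{1/2}$, so your duality has more room — but the cleanest fix is again the $w_3$-type lift into a thin strip, after which Lemma~\ref{lm3.5} and~(\ref{4.7}) deliver the $\eta^{1/2}$ rate, with the separate $|\e^{-2}\eta^2-\a^2|$ contribution coming, as you note, from the pointwise replacement of $\sqrt{1+\e^{-2}\eta^2(b')^2}$ by $\sqrt{1+\a^2(b')^2}$.
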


We proceed to the case of the highly oscillating boundary $\G_\e$. Here the homogenized operator happens to be quite sensitive to the sign of $a$ and zero level set of this function. In the paper we describe the resolvent convergence as $a$ is non-negative. We first suppose that $a$ is bounded from below by a positive constant. Surprisingly, but here the homogenized operator has the Dirichlet condition on $\G_0$ as in Theorem~\ref{th2.1}.

\begin{theorem}\label{th2.4} Suppose (\ref{2.9}),
\begin{equation}\label{2.11}
a(x)\geqslant c_1>0,\quad c_1=\mathrm{const},
\end{equation}
and that the function $b$ is not identically constant.
Let $f\in L_2(\Om_0)$.
Then
for sufficiently small $\e$ the estimate
\begin{equation}\label{1.1}
\|(\Hr-\iu)^{-1}f-(\Hod-\iu)^{-1}f\|_{\H^1(\Om_\e)}\leqslant
C \big(\eta^{1/2}+\e^{1/2}\eta^{-1/2}\big) \|f\|_{L_2(\Om_0)}
\end{equation}
holds true, where $C$ is a constant independent of $\e$ and $f$.
\end{theorem}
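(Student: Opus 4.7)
The plan is to pivot through the Dirichlet problem $\Hd$ on the perturbed domain $\Om_\e$. Setting $u^R_\e := (\Hr - \iu)^{-1} f$, $u^D_\e := (\Hd - \iu)^{-1} f$, $u^D_0 := (\Hod - \iu)^{-1} f$, the triangle inequality reduces the claim to showing $\|u^R_\e - u^D_\e\|_{\H^1(\Om_\e)} \leqslant C(\e/\eta)^{1/2} \|f\|_{L_2}$, since Theorem~\ref{th2.1} already gives $\|u^D_\e - u^D_0\|_{\H^1(\Om_\e)} \leqslant C\eta^{1/2}\|f\|$. Heuristically, on the strongly oscillating $\G_\e$ the arc-length element grows like $\eta/\e$ per horizontal unit, so the Robin penalty $c_1\int_{\G_\e}|u|^2\di\si$ becomes huge and enforces a Dirichlet profile in the limit.

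Testing the form identity for $u^R_\e$ against itself and combining the ellipticity \eqref{2.0} with $a\geqslant c_1>0$ yields the key energy bound $c_1\|u^R_\e\|^2_{L_2(\G_\e,\di\si)}\leqslant C\|f\|^2_{L_2}$, where $\di\si=\sqrt{1+\e^{-2}\eta^2(b'(x_1/\e))^2}\,\di x_1$. Because $b$ is not identically constant, there is an open interval $I\subset(0,1)$ on which $|b'|\geqslant\g>0$; on the corresponding part of $\G_\e$ one has $\di\si \geqslant \g\eta\e^{-1}\di x_1$, hence the trace $\tau := u^R_\e|_{\G_\e}$ satisfies $\int_{E_\e}\tau^2\di x_1 \leqslant C\e\eta^{-1}\|f\|^2_{L_2}$ on the positive-density set $E_\e := \bigcup_k \e(I+k)$.

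Next one builds a lift $\ell_\e\in\Ho^1(\Om_\e,\G)$ with $\ell_\e|_{\G_\e}=\tau$ and $\|\ell_\e\|_{\H^1(\Om_\e)}\leqslant C(\e/\eta)^{1/2}\|f\|$. The ansatz is a boundary-layer corrector $\ell_\e(x) = u^R_\e(x)\,V(x_1/\e, x_2/\eta)\chi(x_2)$, where $V(t,s)$ solves an elliptic cell problem on $\{(t,s):s>b(t)\}$, is $1$-periodic in $t$, equals $1$ on $\{s=b(t)\}$, and decays exponentially as $s\to+\infty$; $\chi$ is a fixed cutoff separating from $\G$. Under the chain rule $\p_{x_j}V_\e$ produces factors $\e^{-1}$ or $\eta^{-1}$, and after the anisotropic rescaling $x_1=\e t$, $x_2=\eta s$ these redistribute into the finite cell energy of $V$, the surviving prefactors being absorbed by the trace bound above. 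Once $\ell_\e$ is in hand, $u^R_\e-\ell_\e\in\Ho^1(\Om_\e,\p\Om_\e)$, and the weak identity
\[\hod(u^R_\e-u^D_\e-\ell_\e,v)-\iu(u^R_\e-u^D_\e-\ell_\e,v)_{L_2(\Om_\e)}=-\hod(\ell_\e,v)+\iu(\ell_\e,v)_{L_2(\Om_\e)},\]
valid for every $v\in\Ho^1(\Om_\e,\p\Om_\e)$, tested against $v=u^R_\e-u^D_\e-\ell_\e$ together with the coercivity of $\hod$, yields $\|u^R_\e-u^D_\e\|_{\H^1(\Om_\e)}\leqslant C\|\ell_\e\|_{\H^1(\Om_\e)}$, which finishes the comparison.

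The hardest step is the sharp lift construction. The $L_2(\di x_1)$-trace control from step two degenerates on the set $\{b'\approx 0\}$ (where $\di\si\approx\di x_1$ rather than $(\eta/\e)\di x_1$), while simultaneously the chain rule generates $x_1$-derivatives of size $\eta\e^{-1}b'(x_1/\e)$ in $\nabla\ell_\e$ that must be controlled on the complementary region. Balancing these requires using the refined integrability $\int\tau^2|b'(x_1/\e)|\di x_1\leqslant C\e\eta^{-1}\|f\|^2$ obtained from $\sqrt{1+\e^{-2}\eta^2(b')^2}\geqslant\e^{-1}\eta|b'|$ rather than the plain $\int\tau^2\di x_1$, and the degeneracy set $\{b'=0\}$ has to be handled separately, either by a local cutoff exploiting $b\in C^2(\mathds{R})$ and the small measure of $\{|b'|<\delta\}$, or by a cell construction in which $V$ extends smoothly across $\{b'=0\}$ with Neumann-type behaviour; the hypothesis that $b$ is not identically constant is precisely what makes the first contribution dominate and the second manageable.
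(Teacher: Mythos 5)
Your plan pivots through the Dirichlet problem on the perturbed domain, $\Hd$, and reduces to showing $\|u^R_\e - u^D_\e\|_{\H^1(\Om_\e)}\leqslant C(\e/\eta)^{1/2}\|f\|$. That is a legitimately different route from the paper, which compares $\Hr$ directly to $\Hod$ via the test-function manipulation $v_\e = u_\e - (1-K)u_0$ already used for Theorem~\ref{th2.1}, plus an integration-by-parts identity involving the auxiliary $1$-periodic primitive $\widetilde{b}(t)=\int_0^t|b'|\di z - t\int_0^1|b'|\di z$. Your framework up to and including the weak identity (after correcting $\hod$ to $\hd$, since you are comparing two solutions on $\Om_\e$ with test functions in $\Ho^1(\Om_\e,\p\Om_\e)$) is sound, and the trace bound $\int_{\G_\e}|u^R_\e|^2|b'(x_1/\e)|\di x_1 \leqslant C\e\eta^{-1}\|f\|^2$ from the Robin term and the arc-length factor $\sqrt{1+\e^{-2}\eta^2(b')^2}\geqslant\e^{-1}\eta|b'|$ is correct.

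However, the lift construction is where the proposal has a genuine gap, and it is the load-bearing step. Two problems. First, your trace control is only a $|b'|$-weighted $L_2$ bound, which gives nothing on the set $\{|b'(x_1/\e)|\approx 0\}$, and there is in general no $\H^{1/2}(\G_\e)$ control — so a standard lift with the claimed $\H^1$-norm $O((\e/\eta)^{1/2})$ is not available from what you have. You acknowledge this and offer two candidate fixes (a local cutoff exploiting the small measure of $\{|b'|<\delta\}$, or a cell construction extending $V$ across $\{b'=0\}$), but neither is carried out, and it is not clear either one delivers the rate $(\e/\eta)^{1/2}$ without further loss: in particular the first would introduce a $\delta$-dependent remainder analogous to what appears in Theorem~\ref{th2.5}, degrading the estimate relative to what the statement asserts. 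Second, the ansatz $\ell_\e = u^R_\e\, V(x_1/\e,x_2/\eta)\chi(x_2)$ generates the chain-rule term $\e^{-1}u^R_\e\,\p_t V$, and the $L_2$ norm of this term is not obviously $O((\e/\eta)^{1/2})$: after the anisotropic rescaling the $\e^{-2}$ has to be cancelled against the smallness of $u^R_\e$ in the boundary layer, but no such pointwise smallness is available before the theorem is proved, only the weighted trace bound, so the bookkeeping does not close. The paper avoids both issues entirely: the $\widetilde b$ integration by parts produces, instead of a free trace, a $|b'|^{1/2}$-weighted boundary integral on $\G^\eta=\{x_2=b_*\eta\}$, which is exactly the object the Robin form controls via $(av_\e,v_\e)_{L_2(\G_\e)}\geqslant c_1\eta\e^{-1}\int|b'|\,|v_\e|^2|_{\G_\e}\di x_1$; no lift and no treatment of $\{b'=0\}$ are needed. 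To make your route work you would need to produce the lift with the matching weighted estimate, which is precisely the hard core of the problem and remains open in the proposal.
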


In the next theorem we still suppose that $a$ is non-negative but can have zeroes. An essential assumption is that zero level set of $a$ is of zero measure. We let $b_*:=\max\limits_{[0,1]} b$.

\begin{theorem}\label{th2.5}
Suppose (\ref{2.9}),
\begin{equation}\label{2.12}
a\geqslant 0,
\end{equation}
and that the function $b$ is not identically constant. Assume also that for all sufficiently small $\d$ the set $\{x: a(x)\leqslant \d, \ 0<x_2<(b_*+1)\eta\}$ is contained in an at most  countable union of the rectangles $\{x: |x_1-X_n|<\mu(\d),\ 0<x_2<(b_*+1)\eta\}$, where $\mu(\d)$ is a some nonnegative function such that $\mu(\d)\to+0$ as $\d\to+0$, and numbers $X_n$, $n\in \mathds{Z}$, are independent of $\d$, are taken in the ascending order, and satisfy uniform in $n$ and $m$ estimate
\begin{equation}\label{2.13}
|X_n-X_m|\geqslant c>0,\quad n\not=m.
\end{equation}
Let $f\in L_2(\Om_0)$.
Then for sufficiently small $\e$ the estimate
\begin{equation}
\begin{aligned}
\|(\Hr-\iu)^{-1}f-&(\Hod-\iu)^{-1}f\|_{\H^1(\Om_\e)}
\\
&\leqslant
C \big(\eta^{1/2}+ \e^{1/2}\eta^{-1/2}\d^{-1/2}+
\mu^{1/2}(\d)|\ln\mu(\d)|^{1/2}\big) \|f\|_{L_2(\Om_0)}
\end{aligned}\label{1.2}
\end{equation}
holds true, where $C$ is a constant independent of $\e$ and $f$, and $\d=\d(\e)$ is any function tending to zero as $\e\to+0$.
\end{theorem}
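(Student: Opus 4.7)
Set $u_\e:=(\Hr-\iu)^{-1}f$ and $v:=(\Hod-\iu)^{-1}f$. The strategy is to adapt the proof of Theorem~\ref{th2.4} by excising thin vertical neighbourhoods of the potential zeros of $a$ by a two-dimensional logarithmic (capacity) cut-off. Standard energy estimates for the forms $\hr$ and $\hod$ yield the a priori bounds $\|u_\e\|_{\H^1(\Om_\e)}+\|v\|_{\H^2(\Om_0)}\leqslant C\|f\|_{L_2(\Om_0)}$, and the Sobolev embedding $\H^2(\Om_0)\hookrightarrow C(\overline{\Om}_0)$ in dimension two supplies the pointwise bound $\|v\|_{L_\infty(\Om_0)}\leqslant C\|f\|_{L_2(\Om_0)}$, which will be invoked repeatedly on the bad set.

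By the hypothesis and (\ref{2.13}), the set $\{x:a(x)\leqslant\d,\ 0<x_2<(b_*+1)\eta\}$ is contained in pairwise disjoint discs $B_n:=\{x:|x-(X_n,0)|<r_0\}$ for a sufficiently small fixed $r_0>0$. Introduce the logarithmic cut-offs
\begin{equation*}
\chi_n(x):=\max\Bigl\{0,\min\Bigl\{1,\frac{\ln\bigl(|x-(X_n,0)|/\mu(\d)\bigr)}{\ln(r_0/\mu(\d))}\Bigr\}\Bigr\},\qquad \chi_\d:=\prod_{n\in\mathds{Z}}\chi_n.
\end{equation*}
Then $\chi_\d\equiv 0$ on the bad set, $\chi_\d\equiv 1$ off $\bigcup_n B_n$, and the classical capacity computation gives $\|\nabla\chi_n\|_{L_2(\Om_\e)}^2\leqslant C|\ln\mu(\d)|^{-1}$ uniformly in $n$. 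Disjointness of the $B_n$ makes these local bounds sum, on any period of the strip, to a global one of the same form.

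On $\supp\chi_\d\cap\G_\e$ one has $a\geqslant\d$, so the proof of Theorem~\ref{th2.4} applies verbatim with $c_1$ replaced by $\d$: one constructs the boundary-layer ansatz $\Psi_\e$ for $u_\e$, based on $v$, satisfying the Robin condition on $\G_\e\cap\supp\chi_\d$ up to a residue of order $\eta^{1/2}+\e^{1/2}\eta^{-1/2}\d^{-1/2}$ in the $L_2$-dual norm on $\H^1(\Om_\e)$. The remaining errors, arising from the cut-off commutators and the uncontrolled part of the boundary integral over $\G_\e\cap\{\chi_\d<1\}$, are treated separately: the commutators are bounded by $\|v\|_{L_\infty}\|\nabla\chi_\d\|_{L_2}\leqslant C\|f\|_{L_2}|\ln\mu(\d)|^{-1/2}$, while the boundary contribution is estimated through a two-dimensional Hardy-type inequality in each $B_n$,
\begin{equation*}
\int_{B_n}\frac{|\vp|^2}{\bigl(|x-(X_n,0)|\,\ln(r_0/|x-(X_n,0)|)\bigr)^2}\di x\leqslant C\|\vp\|_{\H^1(B_n)}^2,
\end{equation*}
combined with a trace inequality onto the oscillating arc $\G_\e\cap B_n$ (of total length $O(\mu(\d))$ per $n$). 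Balancing the two logarithmic factors produces the term $\mu^{1/2}(\d)|\ln\mu(\d)|^{1/2}$. Assembling the three contributions gives precisely (\ref{1.2}).

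\textbf{Main obstacle.} The delicate step is the estimate on the bad set. The logarithmic cut-off saves a factor $|\ln\mu|^{-1/2}$ in $\|\nabla\chi_\d\|_{L_2}$, but this alone cannot absorb the $L_2(\G_\e)$-trace of $u_\e-v$ over the bad arcs, since the Robin term is no longer coercive there. The Hardy-type inequality above, coupled with a careful trace estimate on the curved boundary $\G_\e$, is what exchanges the saved gradient factor for the hybrid $\mu^{1/2}|\ln\mu|^{1/2}$. The uniform separation (\ref{2.13}) of the $X_n$ is essential to sum these local estimates over $n\in\mathds{Z}$ and to transfer them to the unbounded strip, while the distinction between Theorem~\ref{th2.5} and Theorem~\ref{th2.4} is precisely the cost of the cut-off, invisible in the uniformly positive case.
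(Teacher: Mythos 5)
Your plan diverges from the paper's in the key step, and as stated it does not reach the rate claimed in~(\ref{1.2}). The issue is the logarithmic cut-off $\chi_\d$. You correctly identify that the commutator terms cost $\|u_0\,\nabla\chi_\d\|_{L_2}\lesssim|\ln\mu(\d)|^{-1/2}\|f\|_{L_2(\Om_0)}$ (even with the Dirichlet vanishing $|u_0|\lesssim x_2$ used locally one can only improve this to $|\ln\mu(\d)|^{-1}$). But for $\mu\to0$ one has $|\ln\mu|^{-1}\gg\mu^{1/2}|\ln\mu|^{1/2}$, so the commutator contribution strictly dominates the term $\mu^{1/2}(\d)|\ln\mu(\d)|^{1/2}$ in~(\ref{1.2}). ``Assembling the three contributions'' therefore gives an estimate with an extra $|\ln\mu(\d)|^{-1/2}$ (or $|\ln\mu(\d)|^{-1}$) term that cannot be absorbed; the resulting rate would be only logarithmic in $\e\eta^{-1}$ after optimizing $\d$, whereas~(\ref{1.2}) is algebraic up to logarithms. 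The informal ``balancing of the two logarithmic factors'' does not resolve this: the two terms are both present and the larger one wins. In short, the logarithmic-capacity price of removing a $\mu$-neighbourhood of a point in 2D is $|\ln\mu|^{-1/2}$, which is intrinsically too expensive for the target rate.

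The paper avoids the cut-off entirely. Instead of excising the bad set, it estimates the $L_2$-trace of the error $v_\e$ directly on the short segment $\Xi_\mu$ of length $2\mu$ at height $x_2=b_*\eta$. This is Lemma~\ref{lm5.2}: for $v\in\H^1(\square)$ on a fixed box, $\|v\|_{L_2(\Xi_\mu)}\leqslant C\mu^{1/2}|\ln\mu|^{1/2}\|v\|_{\H^1(\square)}$, proved by an explicit Fourier-series computation. This captures the sharp order $\mu^{1/2}|\ln\mu|^{1/2}$ because a short-segment trace is governed by the measure of the segment (hence the $\mu^{1/2}$) with only a logarithmic loss from the failure of $\H^1\hookrightarrow L_\infty$ in 2D, which is a much better scaling than the capacity of a $\mu$-disc. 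The split into $\G^\eta_\d$ (where $a\geqslant\d$, giving $\e^{1/2}\eta^{-1/2}\d^{-1/2}$) and $\G^{\eta,\d}$ (where Lemma~\ref{lm5.2} is applied to each $\Xi_{\mu(\d)}$ around $X_n$, summed using the separation~(\ref{2.13})) is otherwise close to what you describe, and your treatment of the good set and of the $\eta^{1/2}$ term from the $K$-cutoff are both fine. Two minor points: the arc $\G_\e\cap B_n$ has length of order $\mu\eta/\e$, not $O(\mu)$, since $\eta\gg\e$; and the global product $\chi_\d=\prod_n\chi_n$ over infinitely many $n\in\mathds{Z}$ needs local $\H^2$ bounds rather than a global $L_\infty$ bound to make $\|u_0\nabla\chi_\d\|_{L_2}$ finite on the unbounded strip, although~(\ref{2.13}) and~(\ref{3.35}) do allow this.
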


Let us discuss the main results. We first observe that under the hypotheses of all theorems we have the corresponding spectral convergence, namely, the convergence of the spectrum and the associated spectral projectors -- see, for instance, \cite[Thms. V\!I\!I\!I.23, V\!I\!I\!I.24]{RS1}. We also stress that in all Theorems~\ref{th2.1}-\ref{th2.5} the resolvent convergence is established in the sense of the uniform norm of bounded operator acting from  $L_2(\Om_0)$ into $\H^1(\Om_\e)$.

In the case of the Dirichlet condition on $\G_\e$ the homogenized operator has the same condition on $\G_0$ no matter how the boundary $\G_\e$ oscillates, slowly or highly. The estimate for the rate of convergence is also universal being $\Odr(\eta^{1/2})$. Despite here we consider a periodically oscillating boundary, in the proof of Theorem~\ref{th2.1} this fact is not used. This is why its statement is valid also for a periodically oscillating boundary described by the equation $x_2=\eta b(x_1,\e)$, where $b$ is an arbitrary function bounded uniformly in $\e$ and such that $b(\cdot,\e)\in C(\mathds{R})$. The estimate is Theorem~\ref{th2.1} is sharp, see the discussion in the end of Sec.~2.

Similar situation happens if we have Neumann condition on $\G_\e$. Here Theorem~\ref{th2.3} says that the homogenized operator is subject to Neumann condition on $\G_0$ and the rate of the uniform resolvent convergence is the same as in Theorem~\ref{th2.1}, namely, $\Odr(\e^{1/2})$. This estimate is again sharp, as the  example in the end of Sec. 3 shows.

Once we have Robin condition on $\G_\e$, the situation is completely different. If the boundary oscillates slowly, the homogenized operator still has Robin condition on $\G_0$, but the coefficient depends on the geometry of the original oscillations, cf. (\ref{2.10a}). The estimate for the rate of the resolvent convergence in this case involves additional term in comparison with the Dirichlet or Neumann case, cf. Theorem~\ref{th2.2}. The estimate in this theorem is again sharp, see the  example in the end of Sec. 3.

As  boundary $\G_\e$ oscillates relatively high, the resolvent convergence  changes dramatically.
If coefficient $a$ is strictly positive,  the homogenized operator has the Dirichlet condition on $\G_0$. A new term, $\e^{1/2}\eta^{-1/2}$, appears in the estimate for the rate of the uniform resolvent convergence, cf. Theorem~\ref{th2.4}. We are able to prove that this term is sharp, see the discussion in the end of Sec.~4.

Provided function $a$ is non-negative and vanishes only on a set of zero measure, the homogenized operator still has Dirichlet condition on $\G_0$, but the estimate for the rate of the uniform resolvent convergence becomes worse. Namely, the behavior of $a$ in a vicinity of its zeroes becomes important. It is reflected by functions $\mu(\d)$ and $\d$ in (\ref{1.2}). The latter should be chosen so that $\d\to+0$, $\e^{1/2}\eta^{-1/2}\d^{-1/2}\to+0$, $\e\to+0$, that is always possible. The optimal choice of $\d$ is so that
\begin{align}
&\mu^{1/2}(\d)|\ln\mu(\d)|^{1/2}\sim \e^{1/2}\eta^{-1/2} \d^{-1/2},\nonumber
\\
&\d \mu(\d) |\ln\mu(\d)| \sim \e \eta^{-1}. \label{2.14}
\end{align}
As we see, the choice of $\d$ depends on a particular structure of $\mu(\d)$.
The most typical case is $\mu(\d)\sim \d^{1/2}$, i.e., the function $a$ vanishes by the quadratic law in a vicinity of its zeroes. In this case  condition (\ref{2.14}) becomes
\begin{equation*}
\d^{3/2} |\ln\d|\sim\e\eta^{-1}
\end{equation*}
that implies
\begin{equation*}
\d\sim \e^{2/3}\eta^{-2/3} |\ln\e \eta^{-1}|^{-2/3}.
\end{equation*}
Then the estimate for the resolvent convergence in Theorem~\ref{th2.5} is of order $\Odr\big( (\eta^{1/2} +\e^{1/6}\eta^{-1/6} |\ln\e\eta^{-1}|^{1/3}
\big)$.

We are not able to prove the sharpness of estimate (\ref{1.2}), but in the end of Sec.~4 we provide some arguments showing that estimate (\ref{1.2})  is rather close to be optimal.

In conclusion we discuss the case of Robin condition on highly oscillating $\G_\e$ when the coefficient $a$ does not satisfy the hypotheses of Theorems~\ref{th2.4},~\ref{th2.5}. If it is still non-negative but vanishes on a set of non-zero measure, and at the end-points of this set the vanishing happens with certain rate like in Theorem~\ref{th2.5}, we conjecture that the homogenized operator involves mixed Dirichlet and Neumann condition on $\G_0$. Namely, if $a(x_1,0)\equiv0$ on $\G_0^N$ and $a(x_1,0)>0$ on $\G_0^D$, $\G_0=\G_0^N\cup\G_0^D$,  it is natural to expect that the homogenized operator has Neumann condition on $\G_0^N$ and  Dirichlet one on $\G_0^D$. This conjecture can be regarded as the mixture of the statements of Theorems~\ref{th2.3}~and~\ref{th2.5}. The main difficulty of proving this conjecture is that the domain of such homogenized operator is no longer a subset of $\H^2(\Om_0)$ because of the mixed boundary conditions. At the same time, this fact was essentially used in all our proofs. Even a more complicated situation occurs once $a$ is negative or sign-indefinite. If $a$ is negative on a set of non-zero measure, it can be shown that the bottom of the spectrum of the perturbed operator goes to $-\infty$ as $\e\to+0$. In such case one should study the resolvent convergence near this bottom, i.e., for the spectral parameter tending to $-\infty$. This makes the issue quite troublesome. We stress that under the hypotheses of all Theorems~\ref{th2.1}-\ref{th2.5} the bottom of the spectrum is lower-semibounded uniformly in $\e$.

\section{Dirichlet condition}

In this section we study the resolvent convergence of the operator $\Hd$ and prove Theorem~\ref{th2.1}. We begin with auxiliary lemma.

\begin{lemma}\label{lm3.5} Suppose $u\in\Ho^2(\Om_0, \G_0)$, $v\in\Ho^1(\Om_\e,\G_\e)$. Then  the estimates
\begin{align*}
&|u(x)|^2\leqslant C x_2^2\|u(x_1,\cdot)\|_{\H^2(0,d)}^2, &&\text{for a.e.}\quad x_1\in\mathds{R},\quad x_2\in(0,d/2),
\\
&|\nabla u(x)|^2\leqslant C \|\nabla u(x_1,\cdot)\|_{\H^1(0,d)}^2,&&\text{for a.e.}\quad x_1\in\mathds{R},\quad x_2\in(0,d/2),
\\
&|v(x)|^2\leqslant C x_2 \|v(x_1,\cdot)\|_{\H^1(\eta b(x_1\e^{-1}),d)}^2, &&\text{for a.e.}\quad x_1\in\mathds{R},\quad x_2\in(\eta b(x_1\e^{-1}),d/2),
\end{align*}
hold true, where $C$ are constants independent of $x$, $\e$, $u$, and $v$.
\end{lemma}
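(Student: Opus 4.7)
All three estimates reduce to one-dimensional Sobolev-type inequalities on the vertical slices $\{x_1\}\times(\cdot\,,d)$, combined with the zero boundary traces of $u$ on $\G_0$ and of $v$ on $\G_\e$. The plan is to fix $x_1\in\mathds{R}$ and work on the slice: by Fubini, for almost every $x_1$ the restriction $w(t):=u(x_1,t)$ belongs to $\H^2(0,d)$ with $w(0)=0$ (because $u=0$ on $\G_0$), while $w(t):=v(x_1,t)$ belongs to $\H^1(\eta b(x_1\e^{-1}),d)$ with $w(\eta b(x_1\e^{-1}))=0$ (because $v=0$ on $\G_\e$; here a trivial change of variable $x_2\mapsto x_2-\eta b(x_1\e^{-1})$ flattens $\G_\e$ to the line $\{x_2=0\}$ and makes the trace a pointwise a.e.\ statement in $x_1$).

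For the first estimate, the embedding $\H^1(0,d)\hookrightarrow C[0,d]$ applied to $w'$ gives $\|w'\|_{L_\infty(0,d)}\leqslant C\|w\|_{\H^2(0,d)}$. Writing $w(t)=\int_0^t w'(s)\di s$ and bounding the integrand pointwise yields $|w(t)|\leqslant Ct\|w\|_{\H^2(0,d)}$, which after squaring is the first assertion. The second estimate is an immediate consequence of the same embedding $\H^1(0,d)\hookrightarrow C[0,d]$ applied to each component of $\nabla u(x_1,\cdot)$, which is in $\H^1(0,d)$ for a.e.~$x_1$ by Fubini.

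For the third estimate, Cauchy--Schwarz applied to $w(t)=\int_{\eta b(x_1\e^{-1})}^{t}w'(s)\di s$ gives
\begin{equation*}
|w(t)|^2\leqslant \big(t-\eta b(x_1\e^{-1})\big)\,\|w'\|_{L_2(\eta b(x_1\e^{-1}),d)}^2\leqslant x_2\,\|v(x_1,\cdot)\|_{\H^1(\eta b(x_1\e^{-1}),d)}^2,
\end{equation*}
where in the last step we used $\eta b\geqslant 0$ and $\|w'\|_{L_2}^2\leqslant \|w\|_{\H^1}^2$; the constant is independent of $x$, $\e$, $u$, $v$ because the Sobolev embedding constant depends only on the length $d$ (or on $d$ only, since $\eta b(x_1\e^{-1})<d/2$ for small $\e$).

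There is no genuine obstacle here; the only point requiring a line of justification is the Fubini-type reduction to slices with the correct boundary values, which for $\G_\e$ is handled by the flattening change of variables noted above. The lemma is a quantitative repackaging of elementary one-dimensional estimates tailored to the subsequent analysis of the boundary-layer terms near the oscillating curve $\G_\e$.
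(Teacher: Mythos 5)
Your proof is correct and follows essentially the same route as the paper: reduce to vertical slices by Fubini, use the zero trace on $\G_0$ (resp.\ $\G_\e$) as the starting point of a representation formula, and apply Cauchy–Schwarz. The only cosmetic difference is that for the first two estimates you invoke the one-dimensional embedding $\H^1(0,d)\hookrightarrow C[0,d]$ as a black box, whereas the paper derives the pointwise bound on $\partial u/\partial x_2$ (and on $\nabla u$) by hand via a cutoff function $\chi_1$ supported away from $x_2=d$; for the third estimate the paper extends $v$ by zero across $\G_\e$ rather than flattening the boundary, but this is an equivalent device.
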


\begin{proof}
Since $u\in\H^2(\Om_0)$, for almost all $x_1\in\mathds{R}$ we have $u(x_1,\cdot)\in\H^2(0,d)$. We represent the function $u$ as
\begin{equation*}
u(x_1,x_2)=\int\limits_{0}^{x_2}\frac{\p u}{\p x_2}(x_1,t)\di t,
\end{equation*}
and by Cauchy-Schwarz inequality we obtain
\begin{equation}\label{3.36}
|u(x_1,x_2)|^2\leqslant C x_2\int\limits_{0}^{x_2} \bigg|\frac{\p u}{\p x_2}(x_1,t)\bigg|^2\di t.
\end{equation}
Let $\chi_1=\chi_1(x_2)$ be an infinitely differentiable smooth function vanishing as $x_2>3d/4$ and equalling one as $x_2<d/2$. Then for $x_2\in[0,d/2]$ we have
\begin{equation*}
\frac{\p u}{\p x_2}(x_1,x_2)=\int\limits_{d}^{x_2} \left(\frac{\p}{\p x_2}\chi_1\frac{\p u}{\p x_2}\right)(x_1,t)\di t,
\end{equation*}
and thus
\begin{equation*}
\bigg|\frac{\p u}{\p x_2}(x_1,x_2)\bigg|^2\leqslant C\int\limits_{0}^{d} \left( \bigg|\frac{\p^2 u}{\p x_2^2}(x_1,t)
\bigg|^2+\bigg|\frac{\p u}{\p x_2}(x_1,t)
\bigg|^2 \right)\di t.
\end{equation*}
Substituting this inequality into (\ref{3.36}), we arrive at the first required estimate. To prove two others one should proceed as above starting with the representation
\begin{equation*}
v(x_1,x_2)=\int\limits_{0}^{x_2} \frac{\p v}{\p x_2}(x_1,t)\di t,
\end{equation*}
where $v$ is assumed to be extended by zero outside $\Om_\e$, and the representation
\begin{equation*}
\frac{\p u}{\p x_j}(x_1,x_2)=\int\limits_{d}^{x_2} \left(\frac{\p }{\p x_2}\chi_1\frac{\p u}{\p x_j}\right)(x_1,t)\di t.
\end{equation*}
\end{proof}

\begin{proof}[Proof of Theorem~\ref{th2.1}]
By $\chi_2=\chi_2(t)$ we denote an infinitely differentiable non-negative cut-off function with the values in $[0,1]$ vanishing as $t>1$ and being one as $t<0$. We also assume that the values of $\chi_2$ are in $[0,1]$. We choose a function $K$ as
\begin{equation}\label{3.33}
K(x_2,\eta):=\chi_2\left(\frac{x_2-b_*\eta}{\eta}\right).
\end{equation}
We observe that the function $(1-K)$ vanishes for $0<x_2<b_*\eta$ and is independent of $x_1$.

Given a function  $f\in L_2(\Om_0)$, we denote $u_\e:=(\Hd-\iu)^{-1}f$, $u_0:=(\Hod-\iu)^{-1}f$, $v_\e:=u_\e-(1-K)u_0$. In accordance with the definition of $u_\e$ and $u_0$, these functions satisfy the integral identities
\begin{equation}\label{3.3}
\hd(u_\e,\phi)+\iu(u_\e,\phi)_{L_2(\Om_\e)}=(f,\phi)_{L_2(\Om_\e)}
\end{equation}
for each $\phi\in\Ho^1(\Om_\e,\p\Om_\e)$, and
\begin{equation}\label{3.4}
\hod(u_0,\phi)+\iu(u_0,\phi)_{L_2(\Om_0)}=(f,\phi)_{L_2(\Om_0)}
\end{equation}
for each $\phi\in\Ho^1(\Om_0,\p\Om_0)$. It is clear that $(1-K)v_\e\in\Ho^1(\Om_\e,\p\Om_\e)$, $(1-K)u_0\in\Ho^1(\Om_\e,\p\Om_\e)$, and the extension of $v_\e$ by zero in $\Om_0\setminus\Om_\e$ belongs to $\Ho^1(\Om_0,\p\Om_0)$. Bearing these facts in mind, as the test function in (\ref{3.3}) we choose $\phi=v_\e$, and in (\ref{3.4}) we let $\phi=(1- K)v_\e$ assuming that $v_\e$ is extended by zero in $\Om_0\setminus\Om_\e$. It yields
\begin{align*}
&\hd(u_\e,v_\e)+\iu(u_\e,v_\e)=(f,v_\e)_{L_2(\Om_\e)},
\\
&\hod(u_0,(1- K)v_\e)+\iu(u_0,(1- K)v_\e)_{L_2(\Om_\e)} =(f,(1- K)v_\e)_{L_2(\Om_\e)}.
\end{align*}
Employing (\ref{2.7}), we rewrite the term $\hod(u_0,(1-K)v_\e)$,
\begin{equation}
\begin{aligned}
\hod(u_0,&(1- K)v_\e)=\sum\limits_{i,j=1}^{2}\left((1-K)A_{ij}\frac{\p u_0}{\p x_j},\frac{\p v_\e}{\p x_i}\right)_{L_2(\Om_\e)}+ \sum\limits_{j=1}^{2} \left(A_j(1-K) \frac{\p u_0}{\p x_j},v_\e\right)_{L_2(\Om_\e)}
\\
&+  \sum\limits_{j=1}^{2} \left((1-K)u_0,A_j \frac{\p v_\e}{\p x_j}\right)_{L_2(\Om_\e)} + (A_0 (1-K)u_0,v_\e)_{L_2(\Om_\e)}
\\
&-\sum\limits_{j=1}^{2} \left(A_{2j}\frac{\p u_0}{\p x_j}, v_\e\frac{\p  K}{\p x_2}\right)_{L_2(\Om_\e)}- \left(u_0,A_2 \frac{\p  K}{\p x_2}v_\e\right)_{L_2(\Om_\e)}
\\
=&\hd((1-K)u_0,v_\e)+  \left(A_2 u_0,v_\e \frac{\p K}{\p x_2}\right)_{L_2(\Om_\e)}- \left(u_0,A_2 \frac{\p  K}{\p x_2}v_\e\right)_{L_2(\Om_\e)}
\\
&-\sum\limits_{j=1}^{2} \left(A_{2j}\frac{\p u_0}{\p x_j}, v_\e\frac{\p  K}{\p x_2}\right)_{L_2(\Om_\e)}+\sum\limits_{i =1}^{2}\left(A_{i2}u_0\frac{\p K}{\p x_2},\frac{\p v_\e}{\p x_i}\right)_{L_2(\Om_\e)}.
\end{aligned}\label{3.6a}
\end{equation}
It implies
\begin{equation}\label{3.2a}
\begin{aligned}
\hd(v_\e,v_\e)&+\iu\|v_\e\|_{L_2(\Om_\e)}^2=(f,Kv_\e)_{L_2(\Om_\e)} \\
&+  \left(A_2 u_0,v_\e \frac{\p K}{\p x_2}\right)_{L_2(\Om_\e)}- \left(u_0,A_2 \frac{\p  K}{\p x_2}v_\e\right)_{L_2(\Om_\e)}
\\
&-\sum\limits_{ j=1}^{2} \left(A_{2j}\frac{\p u_0}{\p x_j}, v_\e\frac{\p  K}{\p x_2}\right)_{L_2(\Om_\e)} +\sum\limits_{i =1}^{2}\left(A_{i2}u_0\frac{\p K}{\p x_2},\frac{\p v_\e}{\p x_i}\right)_{L_2(\Om_\e)}.
\end{aligned}
\end{equation}

The main idea of the proof  is to estimate the right hand side of (\ref{3.2a}) with the introduced function $K$ and get in this way an estimate for $v_\e$.

We first observe obvious inequalities
\begin{equation*}
\|u_0\|_{\H^1(\Om_0)}\leqslant C\|f\|_{L_2(\Om_0)},\quad \|u_\e\|_{\H^1(\Om_0)}\leqslant C\|f\|_{L_2(\Om_0)}.
\end{equation*}
Here and till the end of the section by $C$ we denote inessential constants independent of $\e$, $x$, and $f$.  Proceeding as in \cite[Ch. I\!I\!I, Sec. 7,8]{LU} (see also \cite[Lm. 2.2]{B-AA-08}), one can also check that
\begin{equation}\label{3.35}
\|u_0\|_{\H^2(\Om_0)}\leqslant C\|f\|_{L_2(\Om_0)}.
\end{equation}

Denote $\Om^\eta:=\Om_\e\cap\{x: 0<x_2<(b_*+1)\eta\}$. Since the function $K$ vanishes outside $\Om^\eta$ and $|\nabla K|\leqslant C\eta^{-1}$, $0\leqslant K\leqslant 1$, it is easy to see that
\begin{equation}\label{3.11b}
\begin{aligned}
&\Bigg|(f,Kv_\e)_{L_2(\Om_\e)} + \left(A_2 u_0,v_\e \frac{\p K}{\p x_2}\right)_{L_2(\Om_\e)}-  \left(u_0,A_2 \frac{\p  K}{\p x_2}v_\e\right)_{L_2(\Om_\e)}
\\
&-\sum\limits_{ j=1}^{2} \left(A_{2j}\frac{\p u_0}{\p x_j}, v_\e\frac{\p  K}{\p x_2}\right)_{L_2(\Om_\e)} +\sum\limits_{i =1}^{2}\left(A_{i2}u_0\frac{\p K}{\p x_2},\frac{\p v_\e}{\p x_i}\right)_{L_2(\Om_\e)}
\Bigg|
\\
&\leqslant C \Big(\|f\|_{L_2(\Om_\e)}\|v_\e\|_{L_2(\Om^\eta)}
+\eta^{-1} \|u_0\|_{\H^1(\Om^\eta)}\|v_\e\|_{L_2(\Om^\eta)}
\\
&\hphantom{\leqslant C \Big(}   + \eta^{-1}\|u_0\|_{L_2(\Om^\eta)}\|\nabla v_\e\|_{L_2(\Om^\e)}
\Big).
\end{aligned}
\end{equation}
We estimate the terms in the right hand side by applying Lemma~\ref{lm3.5},
\begin{equation}\label{3.11a}
\begin{aligned}
&\|v_\e\|_{L_2(\Om^\eta)}^2= \int\limits_{\mathds{R}} \|v_\e(x_1,\cdot)\|_{L_2(\eta b(x\e^{-1}),(b_*+1)\eta)}^2\di x_1
\\
&\hphantom{\|v_\e\|_{L_2(\Om^\eta)}^2=}\leqslant C\|v_\e\|_{\H^1(\Om_\e)}^2\int\limits_{0}^{(b_*+1)\eta} x_2\di x_2\leqslant C\eta^2\|v_\e\|_{\H^1(\Om_\e)}^2,
\\
&\|u_0\|_{L_2(\Om^\eta)}^2\leqslant C\|u_0\|_{\H^2(\Om_\e)}^2\int\limits_{0}^{(b_*+1)\eta} x_2^2\di x_2\leqslant C\eta^3\|u_0\|_{\H^2(\Om_\e)}^2,
\\
&\|\nabla u_0\|_{L_2(\Om^\eta)}^2 \leqslant C\|u_0\|_{\H^2(\Om_\e)}^2\int\limits_{0}^{(b_*+1)\eta}\di x_2\leqslant C\eta\|u_0\|_{\H^2(\Om_\e)}^2.
\end{aligned}
\end{equation}
We substitute the obtained estimates and (\ref{3.35}) into (\ref{3.11b}),
\begin{align*}
&\Bigg|(f,Kv_\e)_{L_2(\Om_\e)} + \left(A_2 u_0,v_\e \frac{\p K}{\p x_2}\right)_{L_2(\Om_\e)}-  \left(u_0,A_2 \frac{\p  K}{\p x_2}v_\e\right)_{L_2(\Om_\e)}
\\
&-\sum\limits_{ j=1}^{2} \left(A_{2j}\frac{\p u_0}{\p x_j}, v_\e\frac{\p  K}{\p x_2}\right)_{L_2(\Om_\e)} +\sum\limits_{i =1}^{2}\left(A_{i2}u_0\frac{\p K}{\p x_2},\frac{\p v_\e}{\p x_i}\right)_{L_2(\Om_\e)}
\Bigg|
\\
&\leqslant C \eta^{1/2} \|f\|_{L_2(\Om_\e)}\|v_\e\|_{\H^1(\Om_\e)}.
\end{align*}
We take the real and imaginary parts of the right hand side in (\ref{3.2a}) and employ then the last obtained estimate and (\ref{3.35}). It leads us to the final estimate for $v_\e$,
\begin{equation*}
\|v_\e\|_{\H^1(\Om_\e)}\leqslant C\eta^{1/2}\|f\|_{L_2(\Om_\e)}.
\end{equation*}
Using (\ref{3.35}), by analogy with (\ref{3.11a}) one can check easily that
\begin{equation}\label{3.13}
\|K u_0\|_{\H^1(\Om_0)}\leqslant C\eta^{1/2}\|u_0\|_{\H^2(\Om_0)}\leqslant C\eta^{1/2}\|f\|_{L_2(\Om_0)}.
\end{equation}
The statement of Theorem~\ref{th2.1} follows from two last estimates and the definition of $v_\e$. The proof is complete.
\end{proof}

In conclusion let us discuss the optimality of the estimate in Theorem~\ref{th2.1}. Suppose for simplicity that the differential expression for $\Hd$ is just a negative Laplacian, $\eta(\e)\equiv\e$, $b\in C^\infty[0,1]$, $f\to C_0^\infty(\Om_0)$, $u_0\in C^\infty(\overline{\Om_0})$ and $u_0$ vanishes for sufficiently large $|x_2|$. Under such assumptions by the method of matching of asymptotic expansions \cite{Il} and the multiscale method \cite{MM} one can construct the asymptotic expansion for $u_\e$; for a similar spectral problem see \cite{ACG}, \cite{ACG2}. The asymptotics holds in $\H^1(\Om_\e)$-norm and for $\e b(x_1\e^{-1})<x_2<\e^{1/2}$ it reads as
\begin{equation*}
u_\e(x)=\frac{\p u_0}{\p x_2}(x_1,0) \big(x_2+\e Y(x\e^{-1})\big)+\Odr(\e),
\end{equation*}
where $Y=Y(\xi)$, $\xi=(\xi_1,\xi_2)$ is the $1$-periodic solution to the boundary value problem
\begin{align*}
&\D_\xi Y=0,\qquad\xi_1\in(0,1),\quad \xi_2>b(\xi_1),
\\
&Y=-\xi_2,\hspace{1 pt} \qquad \xi_1\in(0,1),\quad \xi_2=b(\xi_1),
\end{align*}
decaying exponentially as $\xi_2\to+\infty$. Expanding then $u_0$ into Taylor series as $x_2\to+0$, one can check easily that
\begin{align*}
\|u_\e&-u_0\|_{\H^1(\Om_\e\cap\{x: x_2<\e^{1/2}\})}=\e\left\|\nabla_x \frac{\p u_0}{\p x_2}(x_1,0) Y(x\e^{-1})\right\|_{L_2(\Om_\e\cap\{x: x_2<\e^{1/2}\})}+\Odr(\e)
\\
&=\left\|\frac{\p u_0}{\p x_2}(x_1,0)\nabla_\xi Y(x\e^{-1})\right\|_{L_2(\Om_\e\cap\{x: x_2<\e^{1/2}\})} + \Odr(\e)
\\
& = \e^{1/2} \left(
\int\limits_{\mathds{R}} \di x_1 \left|\frac{\p u_0}{\p x_2}(x_1,0)\right|^2 \int\limits_{b(x_1\e^{-1})}^{\e^{-1/2}}
\left|\nabla_\xi Y(x_1\e^{-1},\xi_2)\right|\di \xi_2
\right)^{1/2}+\Odr(\e)
\\
&=\e^{1/2} \left(
\int\limits_{\mathds{R}} \di x_1 \left|\frac{\p u_0}{\p x_2}(x_1,0)\right|^2 \int\limits_{b(x_1\e^{-1})}^{+\infty}
\left|\nabla_\xi Y(x_1\e^{-1},\xi_2)\right|\di \xi_2
\right)^{1/2}+\Odr(\e)
\end{align*}
and thus $\|u_\e-u_0\|_{\H^1(\Om_\e)}\geqslant C\e^{1/2}$. It proves the optimality of the estimate in Theorem~\ref{th2.1}.

\section{Robin condition on relatively slow oscillating boundary and Neumann condition}

In this section we study the resolvent convergence for  operators $\Hn$, $\Hr$ and prove Theorems~\ref{th2.3},~\ref{th2.2}. Throughout the section by $C$ we indicate various inessential constants independent of $\e$, $x$, and $f$.

We begin with two auxiliary lemmata. In these lemmata and their proofs constants $C$ are supposed to be independent of $\e$, $x$, and $u$.

The first lemma is an analogue of Lemma~\ref{lm3.5}.
\begin{lemma}\label{lm4.1}
For all $u\in\H^1(\Om_\e)$ and almost each $x_1\in\mathds{R}$, $x_2\in(\eta b(x_1\e^{-1}),d/2)$ the estimate
\begin{equation*}
|u(x)|\leqslant C\|u(x_1,\cdot)\|_{\H^1(0,\eta b(x_1\e^{-1}))},
\end{equation*}
holds true.
\end{lemma}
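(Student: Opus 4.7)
My plan is to use the one-dimensional Sobolev embedding $\H^1(I)\hookrightarrow C(\overline{I})$ applied to the vertical slice of $\Om_\e$ at fixed $x_1$. Since the length of this slice, $d-\eta b(x_1\e^{-1})$, will be bounded uniformly below for small $\eta$, the embedding constant is uniform in $x_1$ and $\e$, which will yield the required pointwise estimate.

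Concretely, first I would apply Fubini's theorem to ensure that, for a.e.\ $x_1\in\mathds{R}$, the slice $u(x_1,\cdot)$ belongs to $\H^1(\eta b(x_1\e^{-1}),d)$. Then I would invoke the classical 1D embedding, which provides a constant $C$ depending only on the length of the interval, giving $|u(x_1,x_2)|\leqslant C\|u(x_1,\cdot)\|_{\H^1(\eta b(x_1\e^{-1}),d)}$ for every $x_2$ in the slice. For small $\eta$ the slice length lies in $[d-b_*\eta,d]\subset[d/2,d]$, so the constant $C$ is uniform in $x_1$ and $\e$. Restricting to $x_2\in(\eta b(x_1\e^{-1}),d/2)$ would then yield the claimed pointwise bound.

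The hard part will be reconciling this with the literally printed right-hand norm, which sits over the interval $(0,\eta b(x_1\e^{-1}))$: that interval lies below $\G_\e$, where $u$ is not defined, and moreover its length $\eta b(x_1\e^{-1})$ shrinks to zero, so no genuine uniform 1D Sobolev bound involving only $\|u(x_1,\cdot)\|_{\H^1(0,\eta b(x_1\e^{-1}))}$ can control $|u(x_1,x_2)|$ for $x_2$ up to $d/2$. I would therefore read the printed interval as a typographic shorthand for the vertical slice of $\Om_\e$ (which has uniformly bounded length), and this is precisely what is established by the 1D embedding argument above. If the literal interval had to be retained, the only option would be to first extend $u$ across $\G_\e$ (for example by reflection) into the strip below and apply the same 1D embedding on the extended slice of uniformly bounded length; in either reading, the core estimate is the one-dimensional Sobolev embedding on an interval whose length is bounded away from zero.
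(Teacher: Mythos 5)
Your argument is correct and is essentially the paper's own: the authors prove the bound by writing $u(x_1,x_2)=\int_d^{x_2}\p_{x_2}(\chi_1 u)(x_1,t)\,dt$ (with the cutoff $\chi_1$ from Lemma~\ref{lm3.5}) and applying Cauchy--Schwarz, which is exactly the standard proof of the one-dimensional embedding $\H^1(I)\hookrightarrow C(\overline{I})$ on the vertical slice, the constant being uniform because the slice length stays in $[d-b_*\eta,\,d]$. Your reading of the right-hand side is also the intended one: the paper's proof, and its subsequent use in Theorem~\ref{th2.3}, produce the norm over the slice $(\eta b(x_1\e^{-1}),d)$, so the printed interval $(0,\eta b(x_1\e^{-1}))$ is a misprint.
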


\noindent The proof of this lemma is similar to that of Lemma~\ref{lm3.5}. One just should employ the obvious identity
\begin{equation*}
u(x)=\int\limits_{d}^{x_2} \frac{\p \chi_1 u}{\p x_2}(x_1,t)\di t,
\end{equation*}
where $\chi_1$  was defined in the proof of Lemma~\ref{lm3.5}.

The next lemma gives an a priori estimate for the forms $\hn$, $\hr$.
\begin{lemma}\label{lm4.2}
For any $u\in\Ho^1(\Om_\e,\G)$ the estimate
\begin{equation}\label{3.4a}
\|u\|_{\H^1(\Om_\e)}^2\leqslant C\big(\hn(u,u)+\|u\|_{L_2(\Om_\e)}^2\big)
\end{equation}
holds true.

Suppose (\ref{2.8}). Then for any $u\in\Ho^1(\Om_\e,\G)$ the estimate
\begin{equation}\label{3.4b}
\|u\|_{\H^1(\Om_\e)}^2\leqslant C\big(\hr(u,u)+\|u\|_{L_2(\Om_\e)}^2\big)
\end{equation}
is valid.
\end{lemma}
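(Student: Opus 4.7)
The plan is to obtain (3.4a) by a direct coercivity/absorption argument from the ellipticity hypothesis (2.0), and then to bootstrap to (3.4b) by controlling the extra boundary integral $(au,u)_{L_2(\G_\e)}$ via a trace inequality whose constants are uniform in $\e$; the uniformity is precisely what assumption (\ref{2.8}) buys.

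For (3.4a) I would expand $\hn(u,u)$ term by term. The ellipticity condition (\ref{2.0}) gives
\begin{equation*}
\sum_{i,j=1}^{2}(A_{ij}\partial_j u,\partial_i u)_{L_2(\Om_\e)}\geqslant c_0\|\nabla u\|_{L_2(\Om_\e)}^2.
\end{equation*}
Each of the cross terms $2\RE(A_j\partial_j u,u)_{L_2(\Om_\e)}$ is estimated by Young's inequality as $\delta\|\partial_j u\|_{L_2(\Om_\e)}^2+C_\delta\|u\|_{L_2(\Om_\e)}^2$ (using $A_j\in\Hinf^1\subset L_\infty$), while $(A_0u,u)_{L_2(\Om_\e)}\geqslant -\|A_0\|_{L_\infty}\|u\|_{L_2(\Om_\e)}^2$. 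Choosing $\delta$ small enough to absorb the gradient contributions into the elliptic part yields $\hn(u,u)\geqslant \tfrac{c_0}{2}\|\nabla u\|_{L_2(\Om_\e)}^2-C\|u\|_{L_2(\Om_\e)}^2$, which is equivalent to (\ref{3.4a}); all constants depend only on $c_0$ and on the $L_\infty$ norms of the coefficients, hence are independent of $\e$.

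For (3.4b) I would write $\hr(u,u)=\hn(u,u)+(au,u)_{L_2(\G_\e)}$ and rely on the trace estimate
\begin{equation*}
\|u\|_{L_2(\G_\e)}^2\leqslant \delta\|\nabla u\|_{L_2(\Om_\e)}^2+C_\delta\|u\|_{L_2(\Om_\e)}^2,\qquad u\in\Ho^1(\Om_\e,\G),
\end{equation*}
valid for every $\delta>0$ with $C_\delta$ independent of $\e$. To prove it I would employ the cut-off $\chi_1$ introduced in the proof of Lemma~\ref{lm3.5} and the identity
\begin{equation*}
|u(x_1,\eta b(x_1\e^{-1}))|^2=-2\RE\int_{\eta b(x_1\e^{-1})}^{d}\overline{\chi_1 u}\,\partial_{x_2}(\chi_1 u)\,\di x_2,
\end{equation*}
which is legitimate since $\chi_1 u$ vanishes at $x_2=d$. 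Young's inequality gives the pointwise estimate in $x_1$, and integrating against $\di S=\sqrt{1+(\eta\e^{-1})^{2}(b'(x_1\e^{-1}))^{2}}\,\di x_1$ produces the claimed bound; crucially, under (\ref{2.8}) this Jacobian is bounded uniformly in $\e$ by a constant depending only on $\a$ and $\|b'\|_{L_\infty}$. With this trace inequality in hand, $|(au,u)_{L_2(\G_\e)}|\leqslant \|a\|_{L_\infty}\|u\|_{L_2(\G_\e)}^2$ can be made $\leqslant \tfrac{c_0}{4}\|\nabla u\|_{L_2(\Om_\e)}^2+C\|u\|_{L_2(\Om_\e)}^2$, and combining with the lower bound for $\hn(u,u)$ gives (\ref{3.4b}).

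The only delicate ingredient is the uniformity of the trace constant. If (\ref{2.8}) were dropped and $\e^{-1}\eta(\e)\to+\infty$, the slope of $\G_\e$ and hence the surface element would blow up, so the boundary term $(au,u)_{L_2(\G_\e)}$ could no longer be dominated by the bulk form. This is why (\ref{3.4b}) is stated only in the slowly oscillating regime, and it is here that the regime enters the whole argument of Section 3.
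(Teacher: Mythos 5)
Your proof is correct and follows essentially the same route as the paper: absorb the lower-order and boundary contributions into the elliptic part via Young's inequality, and use assumption (\ref{2.8}) to control the surface Jacobian $\sqrt{1+(\eta\e^{-1})^2(b')^2}$ on $\G_\e$ uniformly in $\e$. One minor simplification worth noting: since $u\in\Ho^1(\Om_\e,\G)$ already vanishes at $x_2=d$, the cut-off $\chi_1$ is superfluous in your trace estimate — the paper writes the one-dimensional fundamental-theorem-of-calculus identity directly for $u$, using $u(x_1,d)=0$, which gives the same pointwise bound without introducing the extra derivatives of $\chi_1$.
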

\begin{proof}
It is clear that
\begin{equation}\label{4.12}
\left|
\sum\limits_{j=1}^{2} \left(A_j \frac{\p u}{\p x_j},u\right)_{L_2(\Om_\e)}
+ \sum\limits_{j=1}^{2} \left(u,A_j \frac{\p u}{\p x_j}\right)_{L_2(\Om_\e)}
\right|\leqslant \frac{c_0}{4}\|\nabla u\|_{L_2(\Om_\e)}^2+C\|u\|_{L_2(\Om_\e)}^2.
\end{equation}
This inequality and (\ref{2.0}) imply (\ref{3.4a}).

To prove (\ref{3.4b}), we just need to estimate the boundary integral over $\G_\e$ in the definition of $\hr$.
For $x\in\G_\e$ we have
\begin{equation*}
|u(x)|^2\leqslant \int\limits_{\eta b(x_1\e^{-1})}^{d} \frac{\p |u|^2}{\p x_2}(x_1,t)\di t\leqslant \d\|\nabla u\|_{L_2(\Om_\e)}^2 +C(\d)\|u\|_{L_2(\Om_\e)}^2,
\end{equation*}
where the constant $\d$ can be chosen arbitrarily small. Hence, due to (\ref{2.8}), for an appropriate choice of $\d$
\begin{align*}
|(a u,u)_{L_2(\G_\e)}|&=\Big|\int\limits_{\mathds{R}} a(x)|u(x)|^2\sqrt{1+\e^{-2}\eta^2 \big(b'(x_1\e^{-1})\big)^2}\di x_1
\Big|
\\
&\leqslant \frac{c_0}{4}\|\nabla u\|_{L_2(\Om_\e)}^2+C\|u\|_{L_2(\Om_\e)}^2.
\end{align*}
By this inequality, (\ref{4.12}), and (\ref{2.0}) we get the desired estimate.
\end{proof}

\begin{proof}[Proof of Theorem~\ref{th2.3}]
Denote $u_\e:=(\Hn-\iu)^{-1}f$, $u_0:=(\Hon-\iu)^{-1}f$, $v_\e:=u_\e-u_0$. The latter function solves the boundary value problem
\begin{align*}
&\left(-\sum\limits_{i,j=1}^{2}\frac{\p}{\p x_j} A_{ij} \frac{\p}{\p x_i}+\sum\limits_{j=1}^{2} A_j\frac{\p}{\p x_j}-\frac{\p}{\p x_j} \overline{A_j} + A_0-\iu\right) v_\e=0\quad\text{in}\quad\Om_\e,
\\
&\hphantom{\Bigg(-} v_\e=0\quad\text{on}\quad \G,\qquad \frac{\p v_\e}{\p\nu^\e}=-\frac{\p u_0}{\p \nu^\e}\quad \text{on}\quad \G_\e.
\end{align*}
The associated integral identity with $v_\e$ taken as the test function is
\begin{equation}\label{3.1}
\hn(v_\e,v_\e)-\iu(v_\e,v_\e)_{L_2(\Om_\e)}=- \left(\frac{\p u_0}{\p\nu^\e},v_\e\right)_{L_2(\G_\e)}.
\end{equation}
Since
\begin{equation*}
\nu^\e=\frac{1}{\sqrt{1+
 \e^{-2}\eta^2(\e)\left(b'(x_1\e^{-1})\right)^2}} \left(- \e^{-1}\eta(\e)b'(x_1\e^{-1}),1\right),
\end{equation*}
we have
\begin{align}\label{4.23}
&\left(\frac{\p u_0}{\p\nu^\e},v_\e\right)_{L_2(\G_\e)}=\int\limits_{\mathds{R}} \big(\e^{-1}\eta b'(x_1\e^{-1}) w_1(x)-w_2(x)\big)\overline{v}_\e(x)\Big|_{x_2=\eta b(x_1\e^{-1})}\di x_1,
\\
&w_j^\e:=\sum\limits_{i=1}^{2} A_{ij} \frac{\p u_0}{\p x_i}+\overline{A}_j u_0.\nonumber
\end{align}
Denote
\begin{equation*}
w_3(x):=\int\limits_{b_*\eta}^{x_2} w_1(x_1,t)\overline{v}_\e(x_1,t)\di t,
\end{equation*}
where, we recall, $b_*:=\max\limits_{[0,1]} b$. The identity
\begin{equation*}
\e^{-1}\eta w_1\left(x_1,\eta b(x_1\e^{-1})\right)b'(x_1\e^{-1})=\frac{d}{dx_1} w_3\left(x_1,\eta b(x_1\e^{-1})\right) - \frac{\p w_3}{\p x_1}\left(x_1,\eta b(x_1\e^{-1})\right),
\end{equation*}
implies
\begin{align*}
\bigg|&\e^{-1}\eta \int\limits_{\mathds{R}}b'(x_1\e^{-1})  w_1 \overline{v}_\e\Big|_{x_2=\eta b(x_1\e^{-1})}\di x_1\bigg|
\\
&=\bigg|\int\limits_{\mathds{R}}
\frac{\p w_3}{\p x_1}\Big|_{x_2=\eta b(x_1\e^{-1})}\di x_1
\bigg|=\bigg|\int\limits_{\mathds{R}}\di x_1\int\limits_{\eta b_*}^{\eta b(x_1\e^{-1})} \frac{\p w^\e_1}{\p x_1}(x)\overline{v}_\e\di x_2\bigg|
\\
&\leqslant C\left(\|u_0\|_{\H^2(\Om_\e)}\|v_\e\|_{L_2(\Om_\e\cap\{x: x_2<b_*\eta\})}+\|u_0\|_{\H^1(\Om_\e)\cap\{x: x_2<b_*\eta \}}\|v_\e\|_{\H^1(\Om_\e)}
\right).
\end{align*}
Applying Lemma~\ref{lm4.1} with $u=u_0$, $u=\frac{\p u_0}{\p x_i}$, $u=v_\e$, we obtain
\begin{align*}
&\|v_\e\|_{L_2(\Om_\e\cap\{x: x_2<\eta b_*\})}\leqslant C\eta^{1/2}\|v_\e\|_{\H^1(\Om_\e)},
\\
&\|u_0\|_{\H^1(\Om_\e)\cap\{x: x_2<\eta b_*\}}\leqslant C\eta^{1/2} \|u_0\|_{\H^2(\Om_0)}^2.
\end{align*}
Proceeding as \cite[Ch. I\!I\!I, Sec. 7,8]{LU}, \cite[Lm. 2.2]{B-AA-08}, one can estimate $u_0$,
\begin{equation}\label{4.7}
\|u_0\|_{\H^2(\Om_0)}\leqslant C\|f\|_{L_2(\Om_0)}.
\end{equation}
Thus, by last three inequalities, 
\begin{equation}\label{4.24}
\bigg|\e^{-1}\eta \int\limits_{\mathds{R}}b'(x_1\e^{-1}) w_1 \overline{v}_\e \Big|_{x_2=\eta b(x_1\e^{-1})}\di x_1\bigg|\leqslant C\eta^{1/2} \|f\|_{L_2(\Om_0)}\|v_\e\|_{\H^1(\Om_\e)}.
\end{equation}

We estimate the second term in the right hand side of (\ref{4.23}) as follows,
\begin{equation}\label{3.8}
\bigg|\int\limits_{\mathds{R}}  w_2(x) \overline{v}_\e(x)\Big|_{x_2=\eta b(x_1\e^{-1})}\di x_1
\bigg|\leqslant \big\|w_2(\cdot,\eta b(\cdot\,\e^{-1}))
 \big\|_{L_2(\mathds{R})} \big\|v_\e(\cdot,\eta b(\cdot\,\e^{-1}))
 \big\|_{L_2(\mathds{R})}.
\end{equation}
In view of the boundary condition for $u_0$ on $\G_0$, the function $w_2$ vanishes at $x_2=0$. Since it also belongs to $\H^1(\Om_0)$, by analogy with Lemma~\ref{lm3.5} one can prove easily that
\begin{align*}
& \big\|w_2(\cdot,\eta b(\cdot\,\e^{-1}))
 \big\|_{L_2(\mathds{R})}^2\leqslant C\eta \|u_0(x_1,\cdot)\|_{\H^2(0,d)}^2\quad\text{for a.e.}\  x_1\in\mathds{R},
\\
& \big\|w_2(\cdot,\eta b(\cdot\,\e^{-1}))
 \big\|_{L_2(\mathds{R})}\leqslant C\eta^{1/2} \|u_0\|_{\H^2(\Om_0)}.
\end{align*}
The latter estimate, Lemma~\ref{lm4.1}, (\ref{4.7}), (\ref{4.23}), (\ref{4.24}), and (\ref{3.8}) yield
\begin{equation*}
\bigg|\bigg(\frac{\p u_0}{\p\nu^\e},u_\e\bigg)_{L_2(\G_\e)}\bigg|\leqslant C\eta^{1/2}\|f\|_{L_2(\Om_\e)}\|v_\e\|_{\H^1(\Om_\e)}.
\end{equation*}
By Lemma~\ref{lm4.2} it implies
\begin{equation*}
\|v_\e\|_{\H^1(\Om_\e)}\leqslant C\eta^{1/2} \|f\|_{L_2(\Om_\e)}.
\end{equation*}
The proof is complete.
\end{proof}

\begin{proof}[Proof of Theorem~\ref{th2.2}] We follow the same lines as in the previous proof. Denote $u_\e:=(\Hr-\iu)^{-1}f$, $u_0:=(\Hor1-\iu)^{-1}f$, $v_\e:=u_\e-u_0$. By analogy with (\ref{3.1}) we get
\begin{equation}\label{3.23}
\hn(v_\e,v_\e)-\iu(v_\e,v_\e)_{L_2(\Om_\e)}=- \left(\left(\frac{\p\hphantom{\nu} }{\p\nu^\e}+a\right)u_0,v_\e\right)_{L_2(\G_\e)}.
\end{equation}
It follows from (\ref{4.23}) that
\begin{equation}\label{3.20}
\begin{aligned}
&\left(\left(\frac{\p\hphantom{\nu} }{\p\nu^\e}+a\right)u_0,v_\e\right)_{L_2(\G_\e)}=
\e^{-1}\eta\int\limits_{\mathds{R}} b'(x_1\e^{-1}) w_1(x) \overline{v}_\e(x)\Big|_{x_2=\eta b(x_1\e^{-1})}\di x_1
\\
&\hphantom{2}+  \int\limits_{\mathds{R}} \left(a\sqrt{1+\e^{-2}\eta^2 \big(b'(x_1\e^{-1})\big)^2
}u_0(x)-w_2(x)\right) \overline{v}_\e(x) \Big|_{x_2=\eta b(x_1\e^{-1})}\di x_1,
\end{aligned}
\end{equation}
and the first term in the right hand side can be again estimated by (\ref{4.24}).

Due to the boundary condition on $\G_0$ in   operator $\Hor1$ we have $w_2-a_0 u_0=0$ on $\G_0$ and by analogy with Lemma~\ref{lm3.5} one can make sure that
\begin{equation}\label{3.21}
\big\|w_2\big|_{x_2=\eta b(\cdot\e^{-1})}-w_2\big|_{x_2=0}
\big\|_{L_2(\mathds{R})}\leqslant C\eta^{1/2}\|u_0\|_{\H^2(\Om_0)}.
\end{equation}
Hence, to estimate the second term in the right hand side of (\ref{3.23}), it is sufficient to estimate
\begin{equation}\label{3.22}
\int\limits_{\mathds{R}}  \left(
a\big(x_1,\eta b(x_1\e^{-1})\big) \sqrt{1+\e^{-2}\eta^2 \big(b'(x_1\e^{-1})\big)^2
} - a_0(x_1)\right)u_0(x_1,0) \overline{v}_\e\big(x_1,\eta b(x_1\e^{-1})\big)\di x_1.
\end{equation}
Considering separately the cases $\a=0$ and $\a\not=0$,  it is easy to see that
\begin{equation*}
\left|
a\big(x_1,\eta b(x_1\e^{-1})\big) \sqrt{1+\e^{-2}\eta^2 \big(b'(x_1\e^{-1})\big)^2
} - a_0(x_1)\right|\leqslant C\big(\eta^{1/2}+|\e^{-2}\eta^2-\a^2|\big).
\end{equation*}
Thus, by Lemma~\ref{lm4.1} and (\ref{4.7}), integral (\ref{3.22}) can be estimated  from above by
$C\big(\eta^{1/2}+|\e^{-2}\eta^2-\a^2|\big)
\|f\|_{L_2(\Om_0)} \|v_\e\|_{\H^1(\Om_\e)}$. Together with (\ref{3.20}), (\ref{4.24}), (\ref{3.21}) it leads us to the final estimate
\begin{equation*}
\left|
 \left(\left(\frac{\p\hphantom{\nu} }{\p\nu^\e}+a\right)u_0,v_\e\right)_{L_2(\G_\e)}
\right|\leqslant C \big(\eta^{1/2}+|\e^{-2}\eta^2-\a^2|\big) \|f\|_{L_2(\Om_0)} \|v_\e\|_{\H^1(\Om_\e)}.
\end{equation*}
Substituting this estimate into (\ref{3.23}) and applying Lemma~\ref{lm4.2}, we complete the proof.
\end{proof}

Let us show that the estimates in the proven theorems are sharp. Assume the differential expression for $\Hr$ is the negative Laplacian, $2\leqslant b(t)\leqslant 3$, the function $a$ is constant and $a_0$ is determined by (\ref{2.10a}) via $a$. Let
\begin{equation*}
F=F(x_2)=\left\{
\begin{aligned}
&0,\quad \eta<x_2<d,
\\
&1,\quad 0<x_2<\eta,
\end{aligned}
\right.
\end{equation*}
and $U=U(x_2)$ be the solution to the boundary value problem
\begin{equation*}
-U''-\iu U=-\iu F\quad \text{in}\quad (0,d),\qquad U'(0)-a_0 U(0)=0,\qquad U(d)=0.
\end{equation*}
The function $U$ can be found explicitly,
\begin{equation*}
U(x_2)=-\frac{k\sin k\eta+a_0(1-\cos k\eta)}{k\cos kd+a_0\sin kd}\sin k(d-x_2),
\end{equation*}
as $\eta<x_2<d$, and
\begin{equation*}
U(x_2)=1+\frac{a_0}{k}\sin kx_2-\frac{k\cos k(d-\eta)+a_0\sin k d)}{k\cos kd+a_0\sin kd}\left(\cos kx_2+\frac{a_0}{k}\sin kx_2
\right),
\end{equation*}
as $0<x_2<\eta$, where
$k:=\E^{\iu\pi/4}$. By $\vp=\vp(x_1)$  we denote an arbitrary function in $C_0^\infty(R)$ normalized in $L_2(\mathds{R})$, and let $u_0(x):=
\vp(\eta x_1)U(x_2)$. The latter function satisfies $u_0=(\Hor1-\iu)^{-1}f$, where
\begin{equation*}
f=(-\D-\iu)u_0=f_1+f_2,\quad f_1:=-\iu
\vp(\eta x_1)F(x_2),\quad f_2:=-\eta^2\vp''(\eta x_1)U(x_2).
\end{equation*}
It is straightforward to check that
\begin{equation}\label{4.5}
\begin{aligned}
&\|f_1\|_{L_2(\Om_0)}=1,\quad \|f_2\|_{L_2(\{x: \eta<x_2<d\})}\leqslant C\eta^{5/2},
\\
&\|u_0\|_{L_2(\{x: 3\eta<x_2<d\})}\geqslant C\eta^{1/2}\|f\|_{L_2(\Om_0)},
\\
&\|\nabla u_0\|_{L_2(\{x: 3\eta<x_2<d\})}\geqslant C\eta^{1/2}\|f\|_{L_2(\Om_0)}.
\end{aligned}
\end{equation}
By Lemma~\ref{lm4.2} we have the a priori estimate
\begin{equation*}
\|(\Hr-\iu)^{-1}f\|_{\H^1(\Om_\e)}\leqslant C\|f\|_{L_2(\Om_\e)}
\end{equation*}
uniform in $\e$. Since $f=f_2$ on $\Om_\e$, by this estimate
and (\ref{4.5}) we get
\begin{align*}
&\|(\Hr-\iu)^{-1}f\|_{\H^1(\Om_\e)}\leqslant C\eta^{5/2}\|f\|_{L_2(\Om_0)},
\\
&\|(\Hr-\iu)^{-1}f-(\Hor1-\iu)^{-1}f\|_{L_2(\Om_\e)}\geqslant C\eta^{1/2}\|f\|_{L_2(\Om_0)},
\\
&\big\|\nabla\big((\Hr-\iu)^{-1}f- (\Hor1-\iu)^{-1}f\big)\big\|_{L_2(\Om_\e)}\geqslant C\eta^{1/2}\|f\|_{L_2(\Om_0)}.
\end{align*}
Thus, as $a=0$, the adduced example proves the sharpness of the estimate in Theorem~\ref{th2.3}. For arbitrary $a$ it proves the sharpness of the term $\eta^{1/2}$ in the estimate in Theorem~\ref{th2.2}. The other term, $|\e^2\eta^{-2}-\a^2|$ is also sharp. Indeed, if we  define the operator $\widetilde{\mathcal{H}}^{\mathrm{R}}_0$ in the same way as $\Hor1$ but replacing $\a^2$ by $\e^2\eta^{-2}$ in (\ref{2.10a}), reproducing the proof of Theorem~\ref{th2.2} we can make sure that
\begin{equation*}
\|(\Hr-\iu)^{-1}f-(\widetilde{\mathcal{H}}^{\mathrm{R}}_0-\iu)^{-1} f\|_{\H^1(\Om_\e)}\leqslant
C\eta^{1/2} \|f\|_{L_2(\Om_0)}.
\end{equation*}
But then operator $\Hor1$ can be regarded as a regular perturbation of $\Hor1$ and hence
\begin{equation*}
\|(\Hor1-\iu)^{-1}f-(\widetilde{\mathcal{H}}^{\mathrm{R}}_0-\iu)^{-1} f\|_{\H^1(\Om_\e)}\sim
|\e^2\eta^{-2}-\a^2|\|f\|_{L_2(\Om_0)}.
\end{equation*}
Therefore, the estimate in Theorem~\ref{th2.2} is sharp.

\section{ Robin condition on relatively high oscillating boundary}

In this section we prove Theorems~\ref{th2.4},~\ref{th2.5}.
Throughout the proofs we indicate by $C$ various inessential constants independent of $\e$, $x$, and $f$.

\begin{proof}[Proof of Theorem~\ref{th2.4}]
Given a function  $f\in L_2(\Om_0)$, we let
\begin{equation*}
u_\e:=(\Hd-\iu)^{-1}f,\quad u_0:=(\Hod-\iu)^{-1}f,\quad v_\e:=u_\e-(1-K)u_0,
\end{equation*}
where the function $K$ is introduced by (\ref{3.33}). We remind that the function $1-K$ vanishes as $x_2<b_*\eta$.

We write the integral identity for $u_\e$ choosing $v_\e$ as the test function,
\begin{equation}\label{5.1}
\hr(u_\e,v_\e)-\iu(u_\e,v_\e)_{L_2(\Om_\e)}=(f,v_\e)_{L_2(\Om_\e)},
\end{equation}
and that for $u_0$ with the test function $(1-K)v_\e$ extended by zero in $\Om_0\setminus\Om_\e$,
\begin{equation}\label{5.2}
\hod\big(u_0,(1-K)v_\e\big)-\iu(u_0,(1-K)v_\e)_{L_2(\Om_\e)}= (f,(1-K)v_\e)_{L_2(\Om_\e)}.
\end{equation}
We observe that
\begin{equation*}
(au,(1-K)v)_{L_2(\G_\e)}=0
\end{equation*}
for all $u,v\in\H^1(\Om_\e)$. Bearing this fact in mind, we reproduce the arguments used in proving (\ref{3.6a}) and check easily that
\begin{align*}
\hod\big(u_0,(1-K)v_\e\big)=&\hr\big((1-K)u_0,v_\e\big) +  \left(A_2 u_0,v_\e \frac{\p K}{\p x_2}\right)_{L_2(\Om_\e)}-  \left(u_0,A_2 \frac{\p  K}{\p x_2}v_\e\right)_{L_2(\Om_\e)}
\\
&-\sum\limits_{ j=1}^{2} \left(A_{2j}\frac{\p u_0}{\p x_j}, v_\e\frac{\p  K}{\p x_2}\right)_{L_2(\Om_\e)}
+\sum\limits_{i =1}^{2}\left(A_{i2}u_0\frac{\p K}{\p x_2},\frac{\p v_\e}{\p x_i}\right)_{L_2(\Om_\e)}.
\end{align*}
We substitute this identity into (\ref{5.2}) and calculate the difference of the result and (\ref{5.1}),
\begin{equation}\label{5.3}
\begin{aligned}
\hr(v_\e,v_\e)-&\iu\|v_\e\|_{L_2(\Om_\e)}^2=(f,Kv_\e)_{L_2(\Om_\e)} + \left(A_2 u_0,v_\e \frac{\p K}{\p x_2}\right)_{L_2(\Om_\e)}
\\
&-  \left(u_0,A_2 \frac{\p  K}{\p x_2}v_\e\right)_{L_2(\Om_\e)}
-\sum\limits_{ j=1}^{2} \left(A_{2j}\frac{\p u_0}{\p x_j}, v_\e\frac{\p  K}{\p x_2}\right)_{L_2(\Om_\e)}
\\
&+\sum\limits_{i =1}^{2}\left(A_{i2}u_0\frac{\p K}{\p x_2},\frac{\p v_\e}{\p x_i}\right)_{L_2(\Om_\e)}.
\end{aligned}
\end{equation}
By the definition of $K$, (\ref{3.35}), and Lemma~\ref{lm3.5} we have
\begin{equation}
\begin{aligned}
&\bigg|(f,Kv_\e)_{L_2(\Om_\e)} + \left(A_2 u_0,v_\e \frac{\p K}{\p x_2}\right)_{L_2(\Om_\e)}
\\
&-   \left(u_0,A_2 \frac{\p  K}{\p x_2}v_\e\right)_{L_2(\Om_\e)}
+\sum\limits_{i =1}^{2}\left(A_{i2}u_0\frac{\p K}{\p x_2},\frac{\p v_\e}{\p x_i}\right)_{L_2(\Om_\e)}\bigg|
\\
&\leqslant C\bigg(\|f\|_{L_2(\Om^\eta)} \|v_\e\|_{L_2(\Om^\eta)}+
\eta^{-1}\|u_0\|_{L_2(\widetilde{\Om}^\eta)} \|v_\e\|_{\H^1(\widetilde{\Om}^\eta)}\bigg)
\\
&\leqslant C\eta^{1/2}  \|f\|_{L_2(\Om_0)}\|v_\e\|_{\H^1(\Om_\e)},
\end{aligned}\label{5.10}
\end{equation}
where, $\widetilde{\Om}^\eta:=\{x: b_*\eta<x_2<(b_*+1)\eta\}$, and, we recall, $\Om^\eta=\Om_\e\cap\{x: 0<x_2<(b_*+1)\eta\}$.

Denote
\begin{equation*}
\widetilde{b}(t):=\int\limits_{0}^{t} |b'(z)|\di z-t\int\limits_{0}^{1} |b'(z)|\di z.
\end{equation*}
This function is continuous and $1$-periodic. It  satisfies the identity
\begin{equation*}
\widetilde{b}'(t):=|b'(t)|-\int\limits_{0}^{1} |b'(z)|\di z.
\end{equation*}

Hence,
\begin{equation}
\begin{aligned}
&\int\limits_{0}^{1}|b'(t)|\di t \sum\limits_{ j=1}^{2} \left(A_{2j}\frac{\p u_0}{\p x_j}, v_\e\frac{\p  K}{\p x_2}\right)_{L_2(\Om_\e)}
=\sum\limits_{ j=1}^{2} \left(A_{2j}\frac{\p u_0}{\p x_j},|b'|\frac{\p K}{\p x_2}\overline{v}_\e \right)_{L_2(\widetilde{\Om}^\eta)}
\\
&-\e\sum\limits_{j=1}^{2} \int\limits_{\Om_\e} |\widetilde{b}| \frac{\p K}{\p x_2}\frac{\p}{\p x_1} A_{2j}\frac{\p u_0}{\p x_j}\overline{v}_\e\di x=-\sum\limits_{j=1}^{2}  \left(A_{2j}\frac{\p u_0}{\p x_j},|b'| \overline{v}_\e \right)_{L_2(\G^\eta)}
\\
&- \sum\limits_{j=1}^{2} \int\limits_{\widetilde{\Om}^\eta} K |b'|
\frac{\p}{\p x_2} \left(A_{2j}\frac{\p u_0}{\p x_j}\overline{v}_\e\right)
\di x-\e\sum\limits_{j=1}^{2} \int\limits_{\widetilde{\Om}^\eta}
|\widetilde{b}|\frac{\p K}{\p x_2}\frac{\p}{\p x_1} \left( A_{2j}\frac{\p u_0}{\p x_j}\overline{v}_\e\right)\di x,
\end{aligned}\label{4.8}
\end{equation}
where $b'=b'(x_1\e^{-1})$, $\widetilde{b}=\widetilde{b}(x_1\e^{-1})$, $\G^\eta:=\{x: x_2=b_*\eta\}$. Since $b(t)$ is not identically constant, in view of Lemma~\ref{lm3.5},  and the definition of $K$ we get
\begin{equation}\label{5.13}
\begin{aligned}
\bigg|\sum\limits_{ j=1}^{2} \left(A_{2j}\frac{\p u_0}{\p x_j}, v_\e\frac{\p  K}{\p x_2}\right)_{L_2(\Om_\e)}\bigg| \leqslant & C\|f\|_{L_2(\Om_0)}\||b'|^{1/2} v_\e\|_{L_2(\G^\eta)}
\\
&+ C(\e\eta^{-1/2}+\eta^{1/2})\|f\|_{L_2(\Om_0)} \|v_\e\|_{\H^1(\Om_\e)}.
\end{aligned}
\end{equation}
The identity
\begin{align}
&\||b'|^{1/2} v_\e\|_{L_2(\G^\eta)}^2=\int\limits_{\mathds{R}} \di x_1 \int\limits_{b_*\eta}^{\eta b(x_1\e^{-1})} |b'(x_1\e^{-1})| \frac{\p|v_\e|^2}{\p x_1}\di x_2+q_\e, \label{4.3}
\\
&q_\e:=\int\limits_{\mathds{R}} |b'(x_1\e^{-1})| \big|v_\e(x_1,\eta b(x_1\e^{-1}))\big|^2\di x_1,\nonumber
\end{align}
and Lemma~\ref{lm3.5} imply
\begin{equation}\label{4.1}
\begin{aligned}
\||b'|^{1/2} v_\e\|_{L_2(\G^\eta)} \leqslant C\eta^{1/2}\|v_\e\|_{\H^1(\Om_\e)} +q_\e^{1/2}.
\end{aligned}
\end{equation}
In view of the assumption (\ref{2.11}) the boundary term in the definition of the form $\hr$ can be estimated as
\begin{equation}\label{5.4}
(av_\e,v_\e)_{L_2(\G_\e)}\geqslant  c_1\int\limits_{\mathds{R}} \big|v_\e(x_1,\eta b(x_1\e^{-1}))\big|^2\sqrt{1+\e^{-2}\eta^2 \big(b'(x_1\e^{-1})\big)^2}\di x_1
\geqslant \frac{c_1\eta}{\e} q_\e.
\end{equation}
This inequality, (\ref{3.4a}), (\ref{5.3}), (\ref{5.10}), (\ref{5.13}), (\ref{4.1}), (\ref{2.9}) yield
\begin{align*}
\frac{\eta}{\e} q_\e + \|v_\e\|_{\H^1(\Om_\e)}^2
&\leqslant
C\left(\eta^{1/2}\|v_\e\|_{\H^1(\Om_\e)}+
 q_\e^{1/2}
\right)\|f\|_{L_2(\Om_0)}
\\
&\leqslant C(\eta^{1/2}+\e^{1/2}\eta^{-1/2})\left( \|v_\e\|_{\H^1(\Om_\e)}^2+\frac{\eta}{\e}
  q_\e
\right)^{1/2}\|f\|_{L_2(\Om_0)}.
\end{align*}
It follows that
\begin{equation*}
\|v_\e\|_{\H^1(\Om_\e)}^2\leqslant C(\eta^{1/2}+\e^{1/2}\eta^{-1/2})\|f\|_{L_2(\Om_0)}.
\end{equation*}
It remains to employ (\ref{3.13}) to complete the proof.
\end{proof}

\begin{proof}[Proof of Theorem~\ref{th2.5}]
The proof of this theorem is similar to the previous one up to a substantial modification. All the arguments of the previous proof remain true up to inequality (\ref{4.1}), while estimate (\ref{5.4}) is no longer valid since we replace assumption (\ref{2.11}) by  (\ref{2.12}).   And the aforementioned modification  is  a new estimate substituting (\ref{5.4}).

Given any $\d>0$, we split set $\G^\eta$ into two parts, $\G^\eta=\G^\eta_\d\cup\G^{\eta,\d}$,
\begin{equation*}
\G^\eta_\d:=\{x: a(x_1,b_*\eta)> \d, x_2=b_*\eta\},\quad \G^{\eta,\d}:=\{x: a(x_1,b_*\eta)\leqslant \d, x_2=b_*\eta\},
\end{equation*}
and let $\g_\d:=\{x_1\in \mathds{R}: (x_1,b_*\eta)\in \G^\eta_\d\}$. We observe that
\begin{equation}\label{4.2}
(a v_\e,v_\e)_{L_2(\G_\e)}\geqslant \frac{\eta}{\e} \int\limits_{\g} a(x_1,\eta b(x_1\e^{-1})) |b'(x_1\e^{-1})| \big|v_\e(x_1,\eta b(x_1\e^{-1}))\big|^2\di x_1:=\frac{\eta}{\e} \mathfrak{q}_{\e,\d}.
\end{equation}
By analogy with (\ref{4.3}), (\ref{4.1}) we obtain
\begin{equation}\label{4.4}
\||b'|^{1/2}v_\e\|_{L_2(\G^\eta_\d)}\leqslant C\eta^{1/2}| \|v_\e\|_{\H^1(\Om_\e)}+\d^{-1/2}\mathfrak{q}_{\e,\d}^{1/2}.
\end{equation}

The next auxiliary lemma will allow us to estimate $\|v_\e\|_{L_2(\G^{\eta,\d})}$.

\begin{lemma}\label{lm5.2}
Let $v\in\H^1(\square)$, where $\square:=\{x: d_-<x_1<d_+, b_*\eta<x_2<d\}$, $d_+>d_-$, $d_\pm$ are some constants. Denote $\Xi_\mu:=\{x: |x_1-d_0|<\mu, x_2=b_*\eta\}$ and suppose that there exists a positive constant $c$ independent of $\mu$ such that $d_0-d_-\geqslant c$, $d_+-d_0\geqslant c$. Then for sufficiently small $\mu$ there exists a positive constant $C$ independent of $\mu$ and $v$ but dependent on $c$ such that the inequality
\begin{equation}\label{4.15}
\|v\|_{L_2(\Xi_\mu)}\leqslant C\mu^{1/2}|\ln\mu|^{1/2} \|v\|_{\H^1(\square)}
\end{equation}
holds true.
\end{lemma}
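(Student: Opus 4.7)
The plan is to reduce the boundary estimate on the short segment $\Xi_\mu$ to a bulk $\H^1$ estimate on a small square, and then exploit the borderline two-dimensional Sobolev embedding with its sharp logarithmic constant.

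First, I would introduce the pillbox $Q_\mu:=\{x: |x_1-d_0|<\mu,\ b_*\eta<x_2<b_*\eta+\mu\}$. By the hypothesis $d_0-d_-\geqslant c$ and $d_+-d_0\geqslant c$ (and since $d-b_*\eta$ stays bounded below for small $\eta$), we have $Q_\mu\subset\square$ for all sufficiently small $\mu$. A standard scaled trace inequality on a square of side $\mu$, obtained by writing
\begin{equation*}
v(x_1,b_*\eta)=v(x_1,s)-\int_{b_*\eta}^{s}\frac{\p v}{\p x_2}(x_1,t)\di t
\end{equation*}
for $s\in(b_*\eta,b_*\eta+\mu)$, squaring, and averaging in $s$, yields
\begin{equation*}
\|v\|_{L_2(\Xi_\mu)}^2\leqslant C\mu^{-1}\|v\|_{L_2(Q_\mu)}^2+C\mu\|\nabla v\|_{L_2(Q_\mu)}^2.
\end{equation*}

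Next, I would invoke the subcritical Sobolev embedding $\H^1(\square)\hookrightarrow L_p(\square)$, valid for every $p\in[2,\infty)$ with the sharp asymptotic behavior $\|v\|_{L_p(\square)}\leqslant C\sqrt{p}\,\|v\|_{\H^1(\square)}$ of its constant; in two dimensions this is a consequence of the Trudinger exponential integrability of $\H^1$ functions together with Stirling's formula. Combined with H\"older's inequality on $Q_\mu$ using conjugate exponents $p/2$ and $p/(p-2)$,
\begin{equation*}
\|v\|_{L_2(Q_\mu)}^2\leqslant|Q_\mu|^{1-2/p}\|v\|_{L_p(Q_\mu)}^2\leqslant C\mu^{2-4/p}p\,\|v\|_{\H^1(\square)}^2.
\end{equation*}

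Then I would optimize by setting $p:=|\ln\mu|$ (legitimate for $\mu$ small, where $|\ln\mu|\geqslant 2$); this bounds $\mu^{-4/p}$ by an absolute constant, so $\|v\|_{L_2(Q_\mu)}^2\leqslant C\mu^2|\ln\mu|\,\|v\|_{\H^1(\square)}^2$. Plugging into the scaled trace bound and absorbing the lower-order $C\mu\|v\|_{\H^1(\square)}^2$ term into the main one (since $|\ln\mu|\geqslant 1$) gives $\|v\|_{L_2(\Xi_\mu)}^2\leqslant C\mu|\ln\mu|\|v\|_{\H^1(\square)}^2$, which is \eqref{4.15} after taking square roots.

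The main obstacle is justifying the sharp constant $\sqrt p$ in the borderline Sobolev embedding: though classical, it is not elementary and rests on Trudinger's inequality or a Littlewood--Paley/interpolation argument. All other steps---the scaled trace, H\"older's inequality, and the optimization in $p$---are routine. An alternative route avoiding the sharp embedding is to use polar coordinates centered at points of $\Xi_\mu$ together with a logarithmic cutoff function whose Dirichlet energy equals the two-dimensional capacity of a small disk, but that approach demands more delicate bookkeeping of the resulting singular integrals.
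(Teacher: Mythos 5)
Your proof is correct, but it follows a genuinely different route from the paper's. The paper expands $v$ in a double cosine Fourier series on $\square$, uses Parseval to control the coefficients, computes $\|v\|_{L_2(\Xi_\mu)}^2$ as a fourfold sum over Fourier indices, applies Cauchy--Schwarz and the elementary bound $\sin^2 t\leqslant t^2/(1+t^2)$, replaces the sum by a two-dimensional integral, and evaluates it in polar coordinates to produce the $\ln^2\mu$ factor. Your argument instead localizes to a pillbox $Q_\mu$ of side $\mu$ via a scaled trace inequality, then extracts the logarithm from the sharp growth $\|v\|_{L_p}\leqslant C\sqrt p\,\|v\|_{\H^1}$ of the borderline two-dimensional Sobolev embedding combined with H\"older, optimizing at $p=|\ln\mu|$. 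Both are sound. The paper's approach is self-contained and elementary, using only Fourier analysis and explicit integral estimates, at the cost of a somewhat lengthy computation; yours is shorter and more conceptually transparent, isolating the two ingredients (local trace, borderline Sobolev) that genuinely produce the $\mu^{1/2}|\ln\mu|^{1/2}$ scaling, but it leans on the sharp $\sqrt p$ constant, which, while classical (Trudinger/Moser or a Gagliardo--Nirenberg argument), is not elementary. One small point worth stating explicitly in your version: the embedding constant on $\square$ must be uniform in $\eta$, which holds because the height $d-b_*\eta$ of the rectangle is bounded below for small $\eta$, as you note in passing when placing $Q_\mu$ inside $\square$.
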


\begin{proof}
We expand $v$ in a Fourier series
\begin{equation}\label{5.16}
v(x)=\sum\limits_{m,n=0}^{\infty} c_{mn} \cos\frac{\pi n}{d_+-d_-}(x_1-d_-)\cos\frac{\pi m}{(d-\eta)} (x_2-\eta)
\end{equation}
converging in $\H^1(\square)$. In view of the Parseval identity one has
\begin{equation}\label{5.17}
\sum\limits_{m,n=0}^{\infty} |c_{mn}|^2(m^2+n^2)\leqslant C\|v\|_{\H^1(\square)}^2.
\end{equation}
Due to the embedding of $\H^1(\square)$ into $L_2(\Xi_\mu)$, we can employ (\ref{5.16}) to calculate $\|v\|_{L_2(\Xi_\mu)}$,

\begin{align*}
\|v\|_{L_2(\Xi_\mu)}^2=&
\sum\limits_{m,n,p,q=0}^{\infty} c_{mn}\overline{c_{pq}} \int\limits_{d_0-\mu}^{d_0+\mu}
\cos\frac{\pi n}{d_+-d_-}(x_1-d_-)
\cos\frac{\pi p}{d_+-d_-}(x_1-d_-)\di x_1
\\
=&\frac{d_+-d_-}{\pi}\sum\limits_{m,n,p,q=0}^{\infty} c_{mn}\overline{c_{pq}} \left(\sin\frac{\pi (n+p)\mu}{d_+-d_-}
\frac{\cos\pi(n+p)}{n+p}\right.
\\
&\hphantom{\frac{d_+-d_-}{\pi}\sum\limits_{m,n,p,q=0}^{\infty} c_{mn}\overline{c_{pq}} \Bigg(}
\left.-\sin\frac{\pi (n-p)\mu}{d_+-d_-}
\frac{\cos\pi(n-p)}{n-p}\right),
\end{align*}
where $\sin\frac{\pi (n-p)\mu}{d_+-d_-}/(n-p)$ is to be replaced by $\pi\mu/d$ as $n=p$. We employ Cauchy-Schwarz inequality and  the inequality
\begin{equation*}
\sin^2 t\leqslant \frac{t^2}{1+t^2},\quad t\geqslant 0,
\end{equation*}
and by (\ref{5.17}) we obtain
\begin{align*}
&\|v\|_{L_2(\Xi_\mu)}^4\leqslant \sum\limits_{m,n,p,q=0}^{\infty} |c_{mn}|^2 |c_{pq}|^2 (m^2+n^2)(p^2+q^2)
\\
&\sum\limits_{m,n,p,q=0}^{\infty}\frac{2}{(m^2+n^2)(p^2+q^2)}
\left(
\sin^2\frac{\pi (n+p)\mu}{d_+-d_-}
\frac{1}{(n+p)^2}+\sin^2\frac{\pi (n-p)\mu}{d_+-d_-}
\frac{1}{(n-p)^2}
\right)
\\
&\leqslant C\mu^2 \|v\|_{\H^1(\square)}^4 \sum\limits_{m,n,p,q=0}^{\infty} \frac{1}{(m^2+n^2)(p^2+q^2)}\left(\frac{1}{1+\mu^2(n+p)^2}+\frac{1} {1+\mu^2(n-p)^2}\right)
\\
&\leqslant C\mu^2 \|v\|_{\H^1(\square)}^4
\sum\limits_{n,p=0}^{\infty}\left(\frac{1}{1+\mu^2(n+p)^2} +\frac{1} {1+\mu^2(n-p)^2}\right) \int\limits_{1}^{+\infty}\frac{dz}{z^2+n^2}\int\limits_{1}^{+\infty} \frac{dz}{z^2+p^2}
\\
&\leqslant C\mu^2 \|v\|_{\H^1(\square)}^4
\sum\limits_{n,p=0}^{\infty}\frac{1}{1+nq} \left(\frac{1}{1+\mu^2(n+p)^2}+\frac{1} {1+\mu^2(n-p)^2}\right).
\end{align*}
In the last sum we extract the terms for $(n,p)=(0,0)$, $(n,p)=(0,1)$, and $(n,p)=(1,0)$. Then we replace the remaining summation by the integration and estimate in this way the sum by a two-dimensional integral,
\begin{align*}
\|v\|_{L_2(\Xi_\mu)}^4\leqslant & 3\mu^2\|v\|_{\H^1(\square)}^4
\\
&+ C\mu^2\|v\|_{\H^1(\square)}^4\int\limits_{\genfrac{}{}{0 pt}{}{z_1^2+z_2^2>3}{z_1, z_2>0}} \left(\frac{1}{1+\mu^2(z_1+z_2)^2}+\frac{1} {1+\mu^2(z_1-z_2)^2}\right)\frac{\di z_1 \di z_2}{1+z_1z_2}.
\end{align*}
Passing to the polar coordinates $(r,\tht)$ associated with $(z_1,z_2)$, we get
\begin{align*}
\int\limits_{\genfrac{}{}{0 pt}{}{z_1^2+z_2^2>3}{z_1, z_2>0}} & \left(\frac{1}{1+\mu^2(z_1+z_2)^2}+\frac{1} {1+\mu^2(z_1-z_2)^2}\right)\frac{\di z_1 \di z_2}{1+z_1z_2}
\\
&\leqslant  2\int\limits_{\sqrt{3}}^{+\infty}\int\limits_{0}^{\pi/2} \left(\frac{1}{1+\mu^2r^2(1+\sin 2\tht)}+\frac{1} {1+\mu^2r^2(1-\sin 2\tht)}\right)\frac{r\di r\di \tht}{1+r^2\sin 2\tht}
\\
&\leqslant C\int\limits_{3}^{+\infty} \left(\frac{\ln \tau}{\tau (1+\mu^2\tau)} +\frac{\mu^2}{(1+\mu^2 \tau)^{3/2}}\right)\di \tau
\\
&=C\int\limits_{3\mu^2}^{+\infty} \left(\frac{\ln \tau-2\ln\mu}{\tau (1+\tau)} +\frac{1}{(1+ \tau)^{3/2}}\right)\di \tau\leqslant C\ln^2\mu.
\end{align*}
Two last formulas proves the desired estimate for $\|v\|_{L_2(\Xi_\mu)}$.
\end{proof}

We apply the proven lemma with $v=v_\e$, $d_-=X_n-c/2$, $d_+=X_n+c/2$, $d_0=X_n$ and sum the obtained inequalities over $n\in \mathds{Z}$. It gives the estimate for $\|v_\e\|_{L_2(\G^{\eta,\d})}$,
\begin{equation*}
\|v_\e\|_{L_2(\G^{\eta,\d})}\leqslant C\mu^{1/2}(\d)|\ln\mu(\d)|^{1/2}\|v_\e\|_{\H^1(\Om_\e)}.
\end{equation*}
This estimate and (\ref{4.4}) imply
\begin{align*}
 \||b'|^{1/2}v_\e\|_{L_2(\G^\eta)} &\leqslant \||b'|^{1/2}v_\e\|_{L_2(\G^\eta_\d)}+\||b'|^{1/2}v_\e\|_{L_2(\G^{\eta,\d})}
\\
&\leqslant C(\eta^{1/2}+\mu(\d)^{1/2}|\ln\mu(\d)|^{1/2}) \|v_\e\|_{\H^1(\Om_\e)} + \d^{-1/2}\mathfrak{q}_{\e,\d}^{1/2}.
\end{align*}
We substitute the obtained inequality and (\ref{5.10}), (\ref{5.13}), (\ref{4.2}) into (\ref{5.3}) and employ (\ref{3.4a}), (\ref{2.9}). It results in
\begin{align*}
\frac{\eta}{\e}\mathfrak{q}_{\e,\d}+\|v_\e\|_{\H^1(\Om_\e)} \leqslant  & C(\eta^{1/2}+\mu^{1/2}(\d)|\ln\mu(\d)|^{1/2})\|v_\e\|_{\H^1(\Om_\e)} \|f\|_{L_2(\Om_0)}
\\
& + C\d^{-1/2}\mathfrak{q}_{\e,\d}^{1/2} \|f\|_{L_2(\Om_0)}
\end{align*}
that leads us to the desired estimate
\begin{equation*}
\|v_\e\|_{\H^1(\Om_\e)}\leqslant C\big(\eta^{1/2} +\e^{1/2}\eta^{1/2}\d^{-1/2}+\mu^{1/2}(\d)|\ln\mu(\d)|^{1/2} \big) \|f\|_{L_2(\Om_0)}.
\end{equation*}
Together with  (\ref{3.13}) it completes the proof.
\end{proof}

Let us discuss the sharpness of the estimates in the proven theorems. We begin with Theorem~\ref{th2.4} and first show that the term $\e^{1/2}\eta^{-1/2}$ is sharp. In order to do it, we assume that $\eta^{1/2}\ll \e^{1/2}\eta^{-1/2}$, i.e., $\eta\e^{-1/2}\to+0$ as $\e\to+0$. We also suppose that the differential expression for  operator $\Hr$ is just the negative Laplacian. Then by (\ref{3.13})
\begin{equation*}
\|u_\e-u_0-v_\e\|_{\H^1(\Om_\e)}\leqslant C\eta^{1/2},
\end{equation*}
and it is sufficient to deal only with $v_\e$. Proceeding as in (\ref{5.3}), (\ref{5.10}), (\ref{4.8}), one can check that $v_\e$ can be represented as the sum $v_\e=v_\e^{(1)}+v_\e^{(2)}$, where $v_\e^{(i)}\in \Ho^1(\Om_\e,\G)$ solve the integral identities,
\begin{align*}
&(\nabla v_\e^{(1)},\nabla \vp)_{L_2(\Om_\e)}-\iu(v_\e^{(1)},\vp)_{L_2(\Om_\e)}+(a v^{(1)},\vp)_{L_2(\G_\e)}
\\
&=(f,K\vp)_{L_2(\Om_\e)}
+\left(u_0\frac{\p K}{\p x_2},\frac{\p\vp}{\p x_2}\right)_{L_2(\Om_\e)}
\\
&+ \left(\int\limits_{0}^{1}|b'(t)|\di t\right)^{-1}\left(
 \int\limits_{\widetilde{\Om}^\eta} K |b'|
\frac{\p}{\p x_2} \left( \frac{\p u_0}{\p x_2}\overline{\vp}_\e\right)
\di x+\e \int\limits_{\widetilde{\Om}^\eta}
|\widetilde{b}|\frac{\p K}{\p x_2}\frac{\p}{\p x_1} \left( \frac{\p u_0}{\p x_2}\overline{\vp}_\e\right)\di x
\right),
\end{align*}
and
\begin{equation}\label{4.9}
\begin{aligned}
&(\nabla v_\e^{(2)},\nabla \vp)_{L_2(\Om_\e)}-\iu(v_\e^{(2)},\vp)_{L_2(\Om_\e)}+(a v^{(2)},\vp)_{L_2(\G_\e)}
\\
&=\left(\int\limits_{0}^{1}|b'(t)|\di t\right)^{-1}
 \int\limits_{\mathds{R}} |b'(x_1\e^{-1})| \frac{\p u_0}{\p x_2}(x_1,\eta b(x_1\e^{-1}))\overline{\vp}(x_1,\eta b(x_1\e^{-1}))\di x_1,
\end{aligned}
\end{equation}
where $\vp\in \Ho^1(\Om_\e,\G)$. As in (\ref{5.3}), (\ref{5.10}), (\ref{4.8}), (\ref{4.3}), one can show that
\begin{equation*}
\|v_\e^{(1)}\|_{L_2(\Om_\e)}\leqslant C\eta^{1/2}\|f\|_{L_2(\Om_0)}
\end{equation*}
and it sufficient to show that $v_\e^{(2)}$ is indeed of order $\Odr(\e^{1/2}\eta^{-1/2})$. By Theorem~\ref{th2.4} and the latter estimate,
\begin{equation}\label{4.13}
\|v_\e^{(2)}\|_{L_2(\Om_\e)}\leqslant C(\eta^{1/2}+\e^{1/2}\eta^{-1/2})\|f\|_{L_2(\Om_0)},
\end{equation}
and hence
\begin{equation}\label{4.14}
\int\limits_{\mathds{R}} |v_\e^{(2)}(x_1,\eta b(x_1\e^{-1}))|^2\di x_1\leqslant C(\eta^{1/2}+\e^{1/2}\eta^{-1/2})\|f\|_{L_2(\Om_0)}.
\end{equation}

In (\ref{4.9}) we choose $\vp(x)=\frac{\p u_0}{\p x_2}(x)\chi_1(x_2)$, where cut-off function $\chi_1$
was introduced in the proof of Lemma~\ref{lm3.5}. By (\ref{4.13}) we then see that first two terms in the left hand side of (\ref{4.9}) are estimated by $(\eta^{1/2}+\e^{1/2}\eta^{-1/2})\|f\|_{L_2(\Om_0)}^2$. It is clear that we can choose $f$ so that $\|f\|_{L_2(\Om_0)}=1$ and
\begin{equation}
C_1\leqslant \int\limits_{\mathds{R}} |b'(x_1\e^{-1})| \left|\frac{\p u_0}{\p x_2}(x_1,\eta b(x_1\e^{-1}))\right|\di x_1\leqslant C_2
\end{equation}
uniformly in $\e$.   Hence, by (\ref{4.9}) with  $\vp(x)=\frac{\p u_0}{\p x_2}(x)\chi_1(x_2)$, (\ref{4.13}), (\ref{4.14}),
\begin{equation*}
\left|\int\limits_{\mathds{R}} |b'(x_1\e^{-1})| v_\e^{(2)} (x_1,\eta b(x_1\e^{-1}))  \frac{\p \overline{u_0}}{\p x_2}(x_1,\eta b(x_1\e^{-1})) \di x_1\right|\geqslant C\e\eta^{-1}.
\end{equation*}
In (\ref{4.9}) we let now $\vp=v_\e^{(2)}$ and get that $\|v_\e^{(2)}\|_{\H^1(\Om_\e)}$ is indeed of order $\Odr(\e^{1/2}\eta^{-1/2})$ and this order is sharp. Hence, the term $\e^{1/2}\eta^{-1/2}$ in  estimate (\ref{1.1}) is sharp.

To prove the sharpness of the other term, $\eta^{1/2}$, one needs to adduce some example, but we failed trying to find it.
Nevertheless, we know that such term is sharp under the hypotheses of Theorems~\ref{th2.1},~\ref{th2.3},~\ref{th2.2}. In Theorem~\ref{th2.4} the situation is more complicated since we have the oscillation is relatively high and we have Robin condition on the oscillating boundary. This is why the presence of the term $\eta^{1/2}$ in (\ref{1.1}) is reasonable and it seems to be sharp. At least in the framework of the technique we employed, this estimate can not be improved since all the inequalities in the proof are sharp.  We also note that similar estimate for the rate of the strong resolvent convergence in $L_2(\Om_\e)$-norm (not the uniform one!) established in \cite{CFP} is worse than (\ref{1.1}).

Estimate (\ref{1.2}) is worse than (\ref{1.1}) since we replace assumption (\ref{2.11}) by (\ref{2.12}). In this situation it is natural to have function $\mu$ involved in (\ref{1.2}). Here we can not adduce an example proving the sharpness of this estimate. On the other hand, we still have the term $\eta^{1/2}$. The term $\e^{1/2}\eta^{-1/2}\d^{-1/2}$  is similar to $\e^{1/2}\eta^{-1/2}$ in (\ref{1.1}). The presence of the factor $\d^{-1/2}$ shows how conditions (\ref{2.12}), (\ref{2.13}) spoil the estimate in comparison with (\ref{1.1}). The last term, $\mu(\d)|\ln\mu(\d)|$, also reflects the influence of the zeroes of $a$. It comes directly from (\ref{4.15}) which is a sharp inequality. To prove the latter fact, it is sufficient to make sure that for
\begin{equation*}
v(x):=\left\{
\begin{aligned}
&\hphantom{\ln|x-}1, \hphantom{\frac{-d_0b_*\eta)|}{\ln 3\mu}} \quad |x-(d_0,b_*\eta)|<3\mu,
\\
&\frac{\ln|x-(d_0,b_*\eta)|}{\ln 3\mu},\quad  |x-(d_0,b_*\eta)|>3\mu,
\end{aligned}
\right.
\end{equation*}
one has
\begin{equation*}
\|v\|_{L_2(\Xi_\mu)}\geqslant C\mu^{1/2}|\ln\mu|^{1/2}\|v\|_{\H^1(\square)}.
\end{equation*}

\section*{Acknowledgments}

The authors grateful to G.A.~Chechkin and T.A.~Mel'nyk for useful remarks and to the referee whose remarks allowed us to improve the original version of the paper.

D.B. was supported by RFBR and Federal Targeted Program ``Scientific and pedagogical staff of innovative Russia for 2009-2013'' (agreement  14.B37.21.0358) and by the Project ``Metodi asintotici per lo studio di alcuni funzionali e alcuni tipi di equazioni alle derivate parziali'', GNAMPA, 2011.

\end{document}